\def\setC{\,\mathbb{C}}
\def\setR{\mathbb{R}}
\def\als{{\scriptscriptstyle\#}}
\def\alp#1{{#1_\als}}
\newcommand{\R}{{\mathbb R}}
\newcommand{\C}{{\mathbb C}}
\newcommand{\Cnxn}{\C^{n \times n}}
\newcommand{\Rnxn}{\R^{n \times n}}
\newcommand{\set}[2]{ \left\{ {#1}\,\big\vert\, {#2}\right\}}
\newcommand{\tr}{\operatorname{tr}}
\newcommand{\inprod}[1]{\left\langle #1 \right\rangle}
\newcommand{\norm}[1]{\Vert #1 \Vert}
\newtheorem{rumatemppu}{Example}
\newenvironment{example}{\begin{rumatemppu}\rm}{\end{rumatemppu}}
\newtheorem{rumatemppuu}{Algorithm}
\title{%
ORTHOGONAL POLYNOMIALS OF 
THE $\mathbb{R}$-LINEAR GENERALIZED MINIMAL RESIDUAL METHOD}
\author{
Marko Huhtanen\thanks{
Department of Mathematics and Systems Analysis,
Aalto University, 
P.O.Box 11100
FI-00076 Aalto,
Finland,
({\tt Marko.Huhtanen@tkk.fi}). Supported by the Academy of Finland.}
\and 
Allan Per\"am\"aki \thanks{
Department of Mathematics and Systems Analysis,
Aalto University, 
P.O.Box 11100
FI-00076 Aalto,
Finland,
({\tt Allan.Peramaki@tkk.fi}).
} 
}
\begin{document}
\maketitle
\begin{abstract} 
The speed of convergence of the $\mathbb{R}$-linear GMRES 
method is bounded in terms of a
polynomial approximation problem on a finite subset of the spectrum. 
This result resembles the classical GMRES convergence estimate
except that the matrix involved is assumed to be condiagonalizable.
The bounds obtained are applicable to the CSYM method, in which case they
are sharp. Then
a three term recurrence for generating a family of 
orthogonal polynomials is shown to exist, yielding a natural link with complex
symmetric Jacobi matrices. 
This shows that a mathematical framework analogous to the one
appearing with the Hermitian Lanczos method
exists in the complex symmetric case.
The probability of being  condiagonalizable is 
estimated with random matrices.
\end{abstract}
\begin{keywords} $\mathbb{R}$-linear GMRES, condiagonalizable, 
orthogonal polynomial, Jacobi matrix,
spectrum, polynomial approximation, CSYM, three term recurrence, random matrix
\end{keywords}

\begin{AMS} 
65F15, 42C05
\end{AMS}

\pagestyle{myheadings}
\thispagestyle{plain}

\markboth{M. HUHTANEN AND A. PER\"AM\"AKI   
}{POLYNOMIALS OF THE $\setR$-LINEAR GMRES} 

\section{Introduction}  
Suggested in \cite{EHV},  there exists an
$\setR$-linear 
GMRES (generalized minimal residual) 
 method for solving 
a large real linear
system of equations of the form
\begin{equation}\label{sys}
\kappa z +\alp{M}\overline{z}=b
\end{equation}
for $\kappa \in \setC$, $\alp M\in \setC^{n \times n}$ and $b \in \setC^n$.
Systems of  this type appear regularly in applications.
This is manifested by the complex symmetric case
which corresponds to $\kappa=0$ and $\alp M^T=\alp M$. 
(For its importance in applications, 
such as the numerical solution of the complex Helmholtz equation,
see \cite{FRE}.)
Then the $\setR$-linear 
GMRES method reduces to the CSYM method \cite{BUN}.
We have $\kappa \not=0$, e.g., 
in an approach to solve the electrical conductivity problem \cite{AMPP}
which requires solving an $\setR$-linear Beltrami equation
\cite{HP}. For a wealth of information regarding
real linearity, see \cite{HR,DS}.
Although the $\setR$-linear GMRES method
is a natural scheme, its properties  are  
not well understood.
Assuming $\alp M$ to be condiagonalizable, 
in this paper  a polynomial approximation problem
on the plane is introduced for assessing its speed of convergence.
In the complex symmetric case 
a three term recurrence for generating  
orthogonal polynomials arises, leading to a natural link with complex
symmetric Jacobi matrices. 

The bounds obtained are intriguing by the fact that
they show that the convergence depends on the spectrum 
of the real linear operator involved.
So far it has not been clear what is the significance of the spectrum 
in general and for iterative methods  in particular \cite{HJ1,EHV}. 
Here it is
shown to play a role similar to what the spectrum does in 
the classical GMRES bounds \cite{SASU}. A striking difference is
that the bounds reveal a strong dependence of the  
speed of convergence on the vector. 

Moreover, with any natural Krylov subspace method there exists a connection between 
the iteration and orthogonal functions. As a rule, these are
associated with normality. 
The Hermitian Lanczos method is related with a three term recurrence for
generating orthogonal polynomials; see \cite{GME} and references therein. 
For unitary matrices the corresponding length of recurrence is five \cite{RAG};
see also \cite{SIB}. 
These are special instances of the general framework 
for normal matrices\footnote{The length of recurrence depends
on what is the least possible degree for an algebraic curve 
to contain the eigenvalues.} described in \cite{HUCO,HULA}. 
In this paper an analogous 
connection is established in the complex symmetric case
to orthogonalize monomials 
\begin{equation}\label{mono}
1,\, \lambda,\, |\lambda|^2,\, \lambda|\lambda|^2,\,
|\lambda|^4, \, \lambda|\lambda|^4, \ldots 
\end{equation}
with a three term recurrence. 
This link is not entirely unexpected
by the fact that antilinear operators
involving a complex symmetric matrix $\alp M$ have been regarded as 
yielding  an analogue of normality \cite[p. 250]{HJ1}.
As opposed to the Hermitian Lanczos method, the structure is richer now 
as orthogonality based on a three term 
recurrence and respective rapid least squares approximation 
is possible on very peculiar curves in $\setC$ 
(and not just on subsets of $\setR$ which are also admissible); 
see the assumptions of Theorem \ref{weier} for the admissible curves.
The arising family of functions can be
viewed to extend radial functions in a natural way.

Unlike diagonalizability in the complex linear case,
condiagonalizability is a more intricate structure.
Random matrix theory is invoked to assess how likely it is to have 
a condiagonalizable operator in \eqref{sys}. In this manner we end 
up touching many aspects of the theory that 
has been linked by the classical Hermitian Lanczos method
in recent years \cite{DEIFT}.

The paper is organized as follows. In Section \ref{luku2} 
bounds on the $\setR$-linear GMRES convergence are derived
in the condiagonalizable case. 
The probability 
of a matrix being condiagonalizable is assessed in Section 
\ref{luku3}. Section \ref{luku4} is concerned with the theory of 
orthogonal polynomials related with the 
$\setR$-linear Arnoldi method.
It is shown that complex symmetry is naturally 
treated within antilinear 
structure. Only then its rich properties become visible.
In Section \ref{luku5} some preliminary numerical experiments
are presented.

\section{Condiagonalizability and the convergence of the 
$\setR$-linear GMRES}\label{luku2}  
Condiagonalizability means that the real linear operator
appearing on the left-hand side of \eqref{sys} is diagonalizable. 
Before deriving the bounds,
we first recall how Krylov subspaces are generated
with an $\setR$-linear operator in \eqref{sys}
 by executing the $\setR$-linear
Arnoldi method.

\subsection{Krylov subspaces of the $\setR$-linear GMRES}  
When  $\setC^n$ is regarded as a vector space over $\setC$,
any real linear operator  can be presented as
\begin{equation}\label{remap}
z \longmapsto \mathcal{M}z=(M  +\alp M\tau)z=
Mz +\alp M\overline{z} 
\end{equation}
with  matrices $M, \alp M\in\setC^{n \times n}$. 
Here  %
$\tau$ denotes the conjugation operator 
on $\setC^n$. 
The set of eigenvalues, i.e., the spectrum of a real
linear operator $\mathcal{M}=M  +\alp M\tau$ is defined as
$$\set{\lambda \in \setC}{ \mathcal{M}z=\lambda z \,
\mbox{ for some }\, z\not=0}.$$
The spectrum is an algebraic set
of degree $2n$ at most. 
For more details on the real linear eigenvalue problem, 
see \cite{EHV,HPf}. 

In this paper we are interested in having $M=\kappa I$ 
for a scalar $\kappa\in \setC$. Then the real linear operator
is denoted by $\mathcal{M}_{\kappa}$.
In this case the spectrum possesses a
relatively simple structure as follows. 

\begin{proposition}\label{spektri} 
The spectrum of $\mathcal{M}_{\kappa}$
consists of circles centred at $\kappa$.
\end{proposition}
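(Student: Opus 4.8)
The plan is to translate the eigenvalue condition into a purely conjugate-linear one by a shift, and then to exploit how the conjugation operator $\tau$ responds to unimodular scalings of the eigenvector. First I would write out $\mathcal{M}_\kappa z = \lambda z$ as $\kappa z + \alp M \overline{z} = \lambda z$, i.e. $\alp M \overline{z} = (\lambda-\kappa)z$ for some $z \neq 0$. Putting $\mu = \lambda - \kappa$, the claim that the spectrum consists of circles centred at $\kappa$ is equivalent to saying that the set of $\mu \in \setC$ admitting a nonzero solution of $\alp M \overline{z} = \mu z$ is invariant under rotation about the origin, hence a union of circles centred at $0$.

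The key step is the phase computation. Fix such a pair $(\mu, z)$ and, for $\theta \in \setR$, set $w = e^{i\theta} z$, which is again nonzero. Because $\tau$ is conjugation, $\overline{w} = e^{-i\theta}\overline{z}$, so
$$ \alp M \overline{w} = e^{-i\theta} \alp M \overline{z} = e^{-i\theta}\mu z = e^{-2i\theta}\mu\, w . $$
Thus $w$ is a nonzero solution of $\alp M \overline{w} = (e^{-2i\theta}\mu) w$, and equivalently $\mathcal{M}_\kappa w = (\kappa + e^{-2i\theta}\mu)w$. Therefore, whenever $\lambda = \kappa + \mu$ lies in the spectrum, so does every point $\kappa + e^{-2i\theta}\mu$.

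As $\theta$ ranges over $\setR$ the factor $e^{-2i\theta}$ sweeps out the entire unit circle, so the points $\kappa + e^{-2i\theta}\mu$ trace the full circle of radius $|\mu| = |\lambda - \kappa|$ centred at $\kappa$. Hence each spectral point drags its whole concentric circle into the spectrum, and the spectrum is a union of circles centred at $\kappa$, as claimed. There is no serious obstacle here; the only point requiring care is the phase doubling---scaling the eigenvector by $e^{i\theta}$ multiplies the eigenvalue by $e^{-2i\theta}$ rather than $e^{-i\theta}$---which one must check still covers the whole unit circle, so that genuine circles, and not merely arcs, are produced.
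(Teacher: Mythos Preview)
Your argument is correct. The paper actually states this proposition without proof, treating it as a known structural fact about antilinear operators (the comment immediately following it refers to coneigenvalues and cites Horn--Johnson). Your phase-rotation computation---replacing an eigenvector $z$ by $e^{i\theta}z$ and observing that the eigenvalue shifts from $\kappa+\mu$ to $\kappa+e^{-2i\theta}\mu$---is precisely the standard way to establish this, and you correctly note that the doubling of the phase still sweeps out the full unit circle, so entire circles (not just arcs) are produced.
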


The eigenvalues of $\mathcal{M}_0$, i.e., circles centred at the
origin, are also called
the coneigenvalues of the matrix $\alp M$ \cite{HJ1}.

To describe methods to compute Krylov subspaces with  $\setR$-linear 
operators, 
we follow \cite[Section 3.1]{EHV}.
Executing the iteration with $\mathcal{M}_{\kappa}$
starting from a vector $b\in \setC^n$,    
we obtain the Krylov subspace
$$\mathcal{K}_j(\mathcal{M}_{\kappa};b)=
{\rm span}\{b,\mathcal{M}_{\kappa}b,\ldots,\mathcal{M}_{\kappa}^{j-1}b\}
$$ $$
= 
{\rm span}\{b,\alp M\overline{b},
\alp M\overline{\alp M}b,\alp M\overline{\alp M}\alp M\overline{b},\ldots\}
$$
which is hence independent of $\kappa$. %
For this an orthonormal basis %
can be computed numerically reliably 
by invoking the  real linear Arnoldi method \cite[p. 820]{EHV}.
In particular, 
if $\dim \mathcal{K}_j(\mathcal{M}_{\kappa};b)=n$ and $Q$ denotes
the respective unitary matrix having the orthonormal basis vectors
as its columns, then $Q^*\alp M \overline{Q}\tau$ is the respective
representation of $\alp M \tau$ in this basis.

The following simple fact is of importance.

\begin{proposition}\label{sfa} 
Let $X\in \setC^{n\times n}$ be invertible.
Then 
$$X^{-1}\mathcal{K}_j(\mathcal{M}_{\kappa};b)=
\mathcal{K}_j(\mathcal{N}_{\kappa};c)$$
with $\alp{N}=X^{-1}\alp{M}\overline{X}$ and $c=X^{-1}b$.
\end{proposition}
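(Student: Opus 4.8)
The plan is to reduce the statement to a single similarity (conjugation) identity at the level of the real linear operators themselves, namely
\begin{equation}\label{simident}
\mathcal{N}_{\kappa} = X^{-1}\mathcal{M}_{\kappa} X,
\end{equation}
viewed as real linear maps on $\setC^n$. Everything else then follows formally, just as it does for ordinary complex linear Krylov subspaces under a change of basis. The only genuinely new ingredient is that $\mathcal{M}_{\kappa}$ carries an antilinear part $\alp M\tau$, so I must check that conjugation by $X$ transforms the antilinear coefficient by the rule $\alp M\mapsto X^{-1}\alp M\overline X$ rather than by the linear rule $\alp M\mapsto X^{-1}\alp M X$; this is precisely the formula defining $\alp N$ in the statement.

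First I would verify \eqref{simident} directly. Applying the right-hand side to an arbitrary $w\in\setC^n$ and using $\overline{Xw}=\overline X\,\overline w$ gives
\begin{equation}\label{compute}
X^{-1}\mathcal{M}_{\kappa} X w = X^{-1}\bigl(\kappa X w + \alp M\,\overline{Xw}\bigr) = \kappa w + X^{-1}\alp M\overline X\,\overline w = \kappa w + \alp N\overline w = \mathcal{N}_{\kappa} w.
\end{equation}
The scalar part $\kappa I$ is untouched because it commutes with $X$ and $X^{-1}$, whereas the antilinear part is exactly where the bar on $X$ enters and produces the stated $\alp N=X^{-1}\alp M\overline X$.

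Next I would iterate \eqref{simident}. Since conjugation by a fixed invertible $X$ is multiplicative on compositions of real linear operators, the inner factors $XX^{-1}$ telescope and $\mathcal{N}_{\kappa}^k = X^{-1}\mathcal{M}_{\kappa}^k X$ for every $k\ge 0$. Evaluating at $c=X^{-1}b$ cancels one factor of $X$ and yields $\mathcal{N}_{\kappa}^k c = X^{-1}\mathcal{M}_{\kappa}^k b$ for all $k$. Thus the generating vectors $c,\mathcal{N}_{\kappa}c,\ldots,\mathcal{N}_{\kappa}^{j-1}c$ of $\mathcal{K}_j(\mathcal{N}_{\kappa};c)$ are exactly the images under the invertible complex linear map $X^{-1}$ of the generating vectors $b,\mathcal{M}_{\kappa}b,\ldots,\mathcal{M}_{\kappa}^{j-1}b$ of $\mathcal{K}_j(\mathcal{M}_{\kappa};b)$. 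Because $X^{-1}$ is linear and invertible, it commutes with taking spans, so $\mathcal{K}_j(\mathcal{N}_{\kappa};c)=X^{-1}\mathcal{K}_j(\mathcal{M}_{\kappa};b)$, which is the assertion.

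The computation in \eqref{compute} is elementary, so there is no serious analytic obstacle; the one point that requires care — and the only place the antilinear structure intervenes — is tracking the complex conjugation through the similarity, i.e.\ recognizing that $\overline{Xw}=\overline X\,\overline w$ forces the coefficient $\overline X$, and not $X$, into $\alp N$. This is also consistent with the $\kappa$-independence of the Krylov subspaces recalled before the statement: one could instead prove the proposition for $\kappa=0$ and invoke that independence on both sides, but the identity \eqref{simident} disposes of all $\kappa$ at once.
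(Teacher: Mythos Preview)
Your proof is correct; the paper states this proposition as a ``simple fact'' and gives no proof, so there is nothing to compare against beyond noting that your verification via the similarity identity $\mathcal{N}_{\kappa}=X^{-1}\mathcal{M}_{\kappa}X$ is exactly the intended elementary check. The only point worth remarking is that you explicitly track how the antilinear part forces $\overline{X}$ into $\alp N$, which is precisely the content the paper leaves implicit.
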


If $X$ is unitary, then the corresponding 
sequences of Krylov subspaces are
indistinguishable in the standard Euclidean geometry, i.e., all
the corresponding inner products computed coincide. 
 
In the  $\setR$-linear GMRES
method for solving 
\eqref{sys} suggested in \cite{EHV}, at the $j$th step one imposes the minimum residual condition
$$\min_{z \in 
\mathcal{K}_j(\mathcal{M}_{\kappa};b)}
\left|\left| \mathcal{M}_{\kappa}z-b
\right|\right|$$
for the approximation to satisfy. With appropriate modifications taking into 
account the real linearity, the iteration can be implemented to 
proceed like the classical GMRES \cite{SASU}. 
In particular, if $\alp M$ is either symmetric or skew-symmetric, then the iteration
can be realized in terms of a three term recurrence. %

It is noteworthy that the $\setR$-linear GMRES converges at least as fast
as the standard GMRES applied to the real system of doubled size obtained by 
separating the real and imaginary parts in \eqref{sys}. 
This fact is not surprising. For Krylov subspace methods, {\em not} writing complex
problems in a real form has been advocated already in \cite[p. 446]{FRE}.
Thereby understanding the convergence of the $\setR$-linear GMRES is of central relevance. 

As a final remark, to precondition the linear system \eqref{sys} such that the structure is preserved, see
\cite[Section 4.2]{HP}.

\subsection{Polynomial approximation
problem of the $\setR$-linear GMRES convergence}  
The following notion is needed in what follows. 

\begin{definition} A matrix
$\alp M\in \setC^{n \times n}$ is said to be condiagonalizable  
if there exists an invertible
matrix $X\in \setC^{n \times n}$ such that
\begin{equation}\label{condia}
\alp M=X\alp\Lambda \overline{X^{-1}}
\end{equation}
with a diagonal matrix $\alp \Lambda$.
\end{definition}

The diagonal entries of $\alp \Lambda$ are also called the
coneigenvalues of $\alp M$. (Admittedly, there is a minor,
although trivial inconsistency here compared with the comment 
following
Proposition \ref{spektri}.)

Analytic polynomials are not sufficient to deal with 
real linear operators.  The following subclass
of (polyanalytic) polynomials\footnote{Polyanalytic
polynomials are polynomials in $\lambda$ and $\overline{\lambda}$ \cite{BALK}.} is of central relevance
for the $\setR$-linear GMRES.

\begin{definition} Polynomials of the form
\begin{equation}\label{polap}
\sum_{k=0}^{\lfloor  \frac{j}{2} \rfloor }(\alpha_{2k} +
\alpha_{2k+1}\lambda)\left| \lambda\right|^{2k}
\end{equation}
with $\alpha_k\in \setC$ and, for $j$ even $\alpha_{j+1}=0$,
are denoted by $\mathcal{P}_j(r2)$. Their union 
$\cup_{j=0}^\infty \mathcal{P}_j(r2)$ is 
denoted by $\mathcal{P}(r2).$
\end{definition}

Clearly, $\mathcal{P}_j(r2)$ is a vector space over $\setC$
of dimension $j+1$.
With the restriction $\lambda \in \setR$   
we are dealing with standard analytic polynomials. It is, however, more
natural to contrast $\mathcal{P}_j(r2)$ with radial functions. This and  
the notation used will be explained Section \ref{inles}. 

Observe that problems involving the conjugated variable are 
becoming more common in applications.
Gravitational lensing is one such instance \cite{KHAN}.

Like in the standard GMRES polynomial approximation problem, it is critical how 
well a nonzero constant can be approximated with the elements
of $\mathcal{P}_j(r2)$. As usual, we denote 
the condition number of a matrix $X \in \setC^{n \times n}$ by 
$\kappa_2(X)=
\left|\left| X \right|\right|\left|\left| X^{-1} \right|\right|.$

\begin{theorem}\label{ylara} 
Suppose $\alp M \in \setC^{n \times n}$  is condiagonalizable
as \eqref{condia}.
If $D$ is a  unitary diagonal matrix satisfying
$D^{-1}X^{-1}b\in \setR^n$, then 
$$\min_{z \in 
\mathcal{K}_j(\mathcal{M}_{\kappa};b)}
\left|\left| \mathcal{M}_{\kappa}z-b
\right|\right|
\leq \kappa_2(X)
\min_{p \in \mathcal{P}_{j-1}(r2)}
\max_{\lambda\in \sigma(\alp D)}
\left|
\kappa p(\lambda)+\lambda \overline{p(\lambda)} -1
\right|\left|\left| b
\right|\right|,$$
where $\alp D$ denotes the diagonal matrix $D^{-1}\alp \Lambda\overline{D}$. 
\end{theorem}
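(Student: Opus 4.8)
The plan is to reduce the real linear least squares problem to a diagonal one by means of the two similarity transformations already available, and then to read off the polynomial structure coordinatewise. First I would invoke Proposition \ref{sfa} with the condiagonalizing matrix $X$. Writing an arbitrary $z\in\mathcal{K}_j(\mathcal{M}_\kappa;b)$ as $z=Xw$ with $w\in\mathcal{K}_j(\mathcal{N}_\kappa;c)$, where $\alp N=X^{-1}\alp M\c X=\alp\Lambda$ and $c=X^{-1}b$, a one-line computation using $\c{X^{-1}}\,\c X=I$ shows $\mathcal{M}_\kappa z-b=X(\mathcal{N}_\kappa w-c)$. Hence $\norm{\mathcal{M}_\kappa z-b}\le\norm{X}\,\norm{\mathcal{N}_\kappa w-c}$, and the whole minimum is bounded by $\norm{X}$ times the corresponding minimum for the diagonalized operator $\mathcal{N}_\kappa$, whose $\alp N$ is the diagonal matrix $\alp\Lambda$.

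Next I would apply the same proposition with the unitary diagonal matrix $D$, this time in order to make the starting vector real. Setting $w=Dv$ turns $\mathcal{N}_\kappa w-c$ into $D(\tilde{\mathcal N}_\kappa v-\tilde c)$, where $\tilde{\mathcal N}_\kappa$ is the real linear operator with diagonal matrix $\alp D=D^{-1}\alp\Lambda\c D$ and $\tilde c=D^{-1}c=D^{-1}X^{-1}b\in\setR^n$ by hypothesis. Since $D$ is unitary this step is an isometry, so the minimum over $w$ equals the minimum over $v\in\mathcal{K}_j(\tilde{\mathcal N}_\kappa;\tilde c)$, and we have arrived at a diagonal operator with a real initial vector.

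The core step is to identify $\mathcal{K}_j(\tilde{\mathcal N}_\kappa;\tilde c)$ with the polynomial class $\mathcal{P}_{j-1}(r2)$. Expanding the Krylov generators $\tilde c,\,\alp D\c{\tilde c},\,\alp D\c{\alp D}\tilde c,\,\alp D\c{\alp D}\alp D\c{\tilde c},\dots$ coordinatewise and using $\c{\tilde c}=\tilde c$, the $i$th entry of the successive generators is exactly $\tilde c_i$ times the monomials $1,\lambda,|\lambda|^2,\lambda|\lambda|^2,\ldots$ of \eqref{mono} evaluated at the $i$th diagonal entry $\lambda_i$ of $\alp D$. Matching the count of generators against the dimension of $\mathcal{P}_{j-1}(r2)$, the complex span of the first $j$ generators is therefore exactly the set of vectors with $i$th entry $p(\lambda_i)\tilde c_i$ for $p\in\mathcal{P}_{j-1}(r2)$. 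For such a $v$, one computes, again using $\c{\tilde c_i}=\tilde c_i$, that the residual $\tilde{\mathcal N}_\kappa v-\tilde c$ has $i$th entry $(\kappa p(\lambda_i)+\lambda_i\c{p(\lambda_i)}-1)\tilde c_i$. Taking the Euclidean norm and pulling out the largest factor bounds $\norm{\tilde{\mathcal N}_\kappa v-\tilde c}$ by $\max_{\lambda\in\sigma(\alp D)}|\kappa p(\lambda)+\lambda\c{p(\lambda)}-1|\cdot\norm{\tilde c}$.

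Assembling the chain and minimizing over $p$ leaves the prefactor $\norm{X}\,\norm{\tilde c}$; since $\tilde c=D^{-1}X^{-1}b$ with $D$ unitary, $\norm{\tilde c}=\norm{X^{-1}b}\le\norm{X^{-1}}\,\norm{b}$, whence $\norm{X}\,\norm{\tilde c}\le\kappa_2(X)\norm{b}$, exactly the claimed constant. The main obstacle I anticipate is the bookkeeping in the core step: verifying that the complex span of the $j$ Krylov generators coincides with $\mathcal{P}_{j-1}(r2)$, which requires care with the parity convention built into the definition of $\mathcal{P}_j(r2)$, and making the reality hypothesis $D^{-1}X^{-1}b\in\setR^n$ do its work both in producing the clean monomial pattern of \eqref{mono} and in simplifying the conjugate appearing in the residual. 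Once these are in place, the remaining estimates are routine.
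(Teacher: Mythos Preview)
Your proposal is correct and follows essentially the same route as the paper: both reduce via $XD$ to a diagonal antilinear operator acting on a real vector, identify the Krylov space coordinatewise with $\mathcal{P}_{j-1}(r2)$ using the reality of $D^{-1}X^{-1}b$, and then bound the diagonal residual by the maximum over $\sigma(\alp D)$ to produce the $\kappa_2(X)$ factor. The only cosmetic difference is that you apply Proposition~\ref{sfa} in two stages ($X$, then $D$) while the paper carries out the change of variables with $XD$ in a single step.
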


\begin{proof} 
Recall that $\mathcal{K}_j(\mathcal{M}_{\kappa};b)=\mathcal{K}_j(\mathcal{M}_{0};b)$ holds for any $\kappa\in \setC$.
Take any $z \in \mathcal{K}_j(\mathcal{M}_{0};b)$
and set $w=D^{-1}X^{-1}z$. Denote $D^{-1}X^{-1}b$ by $r$. 
Since the vector $r$ is real, we obtain
$$w=D^{-1}X^{-1}z =D^{-1}X^{-1} (\sum_{k=0}^{j-1}\alpha_k\mathcal{M}_0^kb)=
\sum_{k=0}^{j-1}\alpha_k\mathcal{D}_0^k (D^{-1}X^{-1}b)=
\sum_{k=0}^{j-1}\alpha_k\mathcal{D}_0^k r$$
$$
= \sum_{k=0}^{\lfloor \frac{j-1}{2}\rfloor}
(\alpha_{2k}+\alpha_{2k+1}\alp D)(\alp D \overline{ \alp D} )^{k}r$$
for some constants $\alpha_k\in \setC$ with
and $\alpha_{2\lfloor (j-1)/2 \rfloor +1}=0$
for $j$ odd. 
This is a polynomial
in a diagonal, i.e., normal
matrix and its adjoint. Hence we have obtained a link between polynomials
in $\lambda$ and $\overline{\lambda}$.
Now we have  
$$\left|\left| \mathcal{M}_{\kappa}z-b
\right|\right|\leq 
\left|\left| XD
\right|\right|
\left|\left| \mathcal{D}_{\kappa}w-r
\right|\right| 
=\left|\left| XD
\right|\right|
\left|\left| \kappa w +\alp D\overline{ w} -r
\right|\right|. 
$$ 
Then  again, since the vector $r$ is real, 
$$
\left|\left| \kappa w +\alp D\overline{ w} -r
\right|\right| \leq
$$$$
\left|\left| 
(\kappa  \sum_{k=0}^{\lfloor \frac{j-1}{2}\rfloor}(\alpha_{2k}+\alpha_{2k+1}\alp D)(\alp D \overline{ \alp D} )^{k}
+\alp D 
\overline{\sum_{k=0}^{\lfloor  \frac{j-1}{2}\rfloor}(\alpha_{2k}+\alpha_{2k+1}\alp D)(\alp D \overline{ \alp D} )^{k}}
   -I)r
\right|\right| 
$$
$$
\leq \max_{\lambda\in \sigma(\alp D)}
\left|
\kappa p(\lambda)+\lambda \overline{p(\lambda)} -1
\right|
\left|\left| (DX)^{-1}
\right|\right|
\left|\left| b
\right|\right|
$$
where $p$ belongs to $\mathcal{P}_{j-1}(r2)$.
Since 
$\left|\left| XD
\right|\right| \left|\left| (XD)^{-1}\right|\right| =\kappa_2(X)$,
the claim follows from this. 
\end{proof}

Observe that in $D^{-1}\alp \Lambda\overline{D}$
the $j$th diagonal entry of $\alp \Lambda$ has been multiplied
by $e^{-2i\theta_j}$, where 
$e^{i\theta_j}$ is the $j$th diagonal entry of $D$.

The key here is the fact that the latter minimisation problem
is of standard type. Being part of classical approximation theory
of functions, there is
no linear algebra involved.\footnote{A term coined by P. Halmos, 
noncommutative approximation theory means matrix (operator)
approximation problems in general.}  
However, unlike the usual GMRES bound
\cite[Section 3.4]{SASU}, 
the point set $\sigma(\alp D)$ 
depends strikingly on the vector $b$. 
(The choice of $D$ to make $X^{-1}b$ real depends on $b$.)
It is a finite 
subset of the spectrum consisting of at most $n$ points, though. 
Generically these points are unique. (Generic here means that
$\alp M$, when condiagonalizable, is assumed to have distinct coneigenvalues.)
Moreover, for an appropriate choice of $b$, 
it can be any subset of the spectrum
with the restriction that
the number coneigenvalues of $\alp M$ of the same modulus does not change. 

The bound shows also that the notion of  ``spectral radius'' for
a diagonalizable antilinear operator is natural. Observe that, by
executing the real linear Arnoldi method, it is straightforward  to
estimate the extreme coneigenvalues of a large (and possibly sparse)
$\alp M$. The rationale is
analogous to the way the classical Arnoldi method yields
eigenvalue approximations.

The convergence behaviour of the CSYM method has been regarded as somewhat puzzling
as well, partly because of the somewhat
unaccesible structure of the appearing Krylov subspaces. 
For some comparisons between other iterative methods, 
see \cite{BUN,KAMA}. (Lack of understanding the convergence
is not just of theoretical interest. It can prevent efficient 
preconditioning.)
The following yields a way to look at it.

\begin{corollary}\label{csymylara} For the CSYM method we 
can choose $X$ to be unitary to have
$$\min_{z \in 
\mathcal{K}_j(\mathcal{M}_{0};b)}
\left|\left| \mathcal{M}_{0}z-b
\right|\right| \leq
\min_{p \in \mathcal{P}_{j-1}(r2)}
\max_{\lambda\in \sigma(\alp D)}
\left|
\lambda \overline{p(\lambda)} -1
\right|\left|\left| b
\right|\right|,$$
where 
 $\alp D=D^{-1}\alp \Lambda\overline{D}$.
\end{corollary}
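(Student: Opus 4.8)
The plan is to obtain this Corollary as a direct specialization of Theorem \ref{ylara} to the CSYM setting, where two distinct simplifications become available. Recall from the introduction that CSYM corresponds to $\kappa=0$ together with the complex symmetry $\alp M^T=\alp M$. Setting $\kappa=0$ in the bound of Theorem \ref{ylara} immediately removes the term $\kappa p(\lambda)$, so that the quantity inside the maximum collapses from $\left|\kappa p(\lambda)+\lambda\overline{p(\lambda)}-1\right|$ to $\left|\lambda\,\overline{p(\lambda)}-1\right|$, which already matches the right-hand side of the claimed inequality.

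The key step is to show that complex symmetry permits the condiagonalization \eqref{condia} to be realized with a \emph{unitary} $X$, so that $\kappa_2(X)$ can be driven to $1$. Here I would invoke the Takagi (Autonne--Takagi) factorization: every complex symmetric matrix admits $\alp M=X\alp\Lambda X^T$ with $X$ unitary and $\alp\Lambda$ a real nonnegative diagonal matrix. Since a unitary $X$ satisfies $X^{-1}=X^*$, one has $\overline{X^{-1}}=\overline{X^*}=X^T$, and hence $\alp M=X\alp\Lambda X^T=X\alp\Lambda\,\overline{X^{-1}}$ is exactly a condiagonalization of the form \eqref{condia}, now with $X$ unitary. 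This is precisely the hypothesis under which Theorem \ref{ylara} applies, with the additional structure that $X$ may be taken unitary.

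Finally, a unitary $X$ has $\kappa_2(X)=\norm{X}\,\norm{X^{-1}}=1$, so this prefactor disappears from the estimate supplied by Theorem \ref{ylara}. Combining the three observations---vanishing of the $\kappa$ term, the existence of a unitary condiagonalizer via Takagi, and $\kappa_2(X)=1$---then yields the stated inequality verbatim, with $\alp D=D^{-1}\alp\Lambda\overline{D}$ exactly as inherited from the theorem. I expect the only nonroutine point to be the recognition that complex symmetry forces the existence of a unitary condiagonalizer, i.e.\ the applicability of Takagi's factorization; once this is in hand, the remainder is a mechanical substitution of $\kappa=0$ and $\kappa_2(X)=1$ into Theorem \ref{ylara}.
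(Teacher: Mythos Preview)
Your proposal is correct and matches the paper's intent: the corollary is stated without proof, as an immediate specialization of Theorem~\ref{ylara} once one recognizes that a complex symmetric $\alp M$ admits a unitary condiagonalizer via the Autonne--Takagi factorization (so $\kappa_2(X)=1$) and that $\kappa=0$ in the CSYM setting. The paper later makes the unitary condiagonalization explicit in Section~\ref{lequ} (equation~\eqref{ehto}), confirming that this is precisely the mechanism behind the corollary.
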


These bounds are clearly sharp \cite{GRGU}.

Observe that if $\sigma(\alp D)$ is on a line through the origin,
then the CSYM method reduces to the MINRES (minimal residual)
method \cite{PASA} for Hermitian matrices. 
In this case the convergence can be
regarded as well understood. For instance, then  
the convergence can be expected to be faster 
if the origin is not included in the convex hull
of the spectrum. The difference can be dramatic as well.

\section{The probability of %
condiagonalizability}\label{luku3}  
In complex linear matrix analysis, a linear operator is diagonalizable with
probability one. Therefore the analysis of the speed of convergence of
iterations based on classical approximation theory of functions
 on the spectrum  is generically a viable approach. In a typical case it 
can be expected to yield good estimates.
 
Although the set of condiagonalizable matrices includes
complex symmetric matrices, a subspace of $\setC^{n \times n}$
of dimension $n(n+1)/2$, 
assuming condiagonalizability  turns out to be
much more restrictive than assuming diagonalizability. 
Quantitatively this can be expressed
in terms of the following result on random matrices.

\begin{theorem}\label{condiagthm}
Let $\alp M\in\Cnxn$ have entries with real and imaginary
parts drawn independently from the standard normal distribution. Then the
probability that $\alp M$ is condiagonalizable is $2^{-n(n-1)/2}$.
\end{theorem}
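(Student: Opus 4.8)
The plan is to reduce condiagonalizability to a spectral condition on the matrix $B=\alp M\overline{\alp M}$ and then to evaluate a random matrix probability. First I would record the standard criterion (see \cite{HJ1}): $\alp M$ is condiagonalizable if and only if $B=\alp M\overline{\alp M}$ is diagonalizable and has only nonnegative real eigenvalues. One direction is immediate from \eqref{condia}, since
$$\alp M\overline{\alp M}=X\alp\Lambda\overline{X^{-1}}\,\overline{X}\,\overline{\alp\Lambda}\,X^{-1}=X\,\alp\Lambda\overline{\alp\Lambda}\,X^{-1},$$
and $\alp\Lambda\overline{\alp\Lambda}$ is diagonal with entries $|\lambda_k|^2\ge 0$; the converse is the content of the cited criterion.

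Next I would isolate the structural facts that make the event tractable. Because $\overline{B}=\overline{\alp M}\,\alp M$ is similar to $B$ (cyclically, via the almost surely invertible $\alp M$), the spectrum of $B$ is invariant under complex conjugation; in particular $\tr B$ and $\det B=|\det\alp M|^2$ are real and $\det B\ge 0$. Equivalently, writing $\alp M=A+iC$ with $A,C$ real, the real form $R=\smat{A & C \\ C & -A}$ of the antilinear operator $z\mapsto\alp M\overline{z}$ satisfies $R^{2}=\smat{A^2+C^2 & AC-CA \\ CA-AC & A^2+C^2}$, the real form of $B$, and anticommutes with multiplication by $i$, so $R$ is similar to $-R$ and its eigenvalues occur in pairs $\pm r_k$. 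The eigenvalues of $B$ are then the squares $r_k^2$, so $R$ has real spectrum precisely when $B$ has nonnegative real spectrum, the nonnegativity being automatic; moreover $B$ has distinct eigenvalues almost surely and is then diagonalizable. Thus, up to a null event,
$$\alp M\ \text{is condiagonalizable}\iff \text{all eigenvalues of }R\text{ are real}\iff \sigma(B)\subset[0,\infty),$$
and the theorem reduces to computing the probability that the structured real Gaussian matrix $R$ has an entirely real spectrum.

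The remaining step is this all-real-spectrum probability, which is the heart of the matter. I would compute it in the Edelman--Kostlan--Shub style for real Gaussian ensembles: introduce a real-Schur/Youla-type decomposition separating the $n$ coneigenvalues $r_k$ from the orthogonal (unitary consimilarity) degrees of freedom, restrict to the stratum on which all eigenvalues are real, compute the Jacobian of this change of variables, and integrate the Gaussian weight over the resulting domain. The target value factors as $2^{-n(n-1)/2}=\prod_{k=1}^{n-1}2^{-k}$, which signals that a Vandermonde-type Jacobian in the $r_k$ and the Gaussian normalization together contribute one factor $2^{-(n-1)}$ per dimension. The same product structure suggests an alternative deflation route: almost surely pick a real coneigenpair $\alp M\overline{x}=\lambda x$ with $\lambda\ge 0$, extend $x$ to a unitary $U$, and pass to $U^{*}\alp M\overline{U}=\smat{\lambda & * \\ 0 & \alp M'}$, reducing to an $(n-1)\times(n-1)$ problem and suggesting the recursion $P_n=2^{-(n-1)}P_{n-1}$ with base case $P_1=1$ (for $n=1$, $B=|\alp M|^2\ge 0$ always).

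The main obstacle is precisely this measure computation. The ensemble $R$ is \emph{not} a standard real Ginibre matrix, so its joint eigenvalue density must be derived rather than quoted, and the bookkeeping of the consimilarity measure is delicate because complex conjugation enters the similarity $X\alp\Lambda\overline{X^{-1}}$, so the usual unitary-invariance arguments need modification. The deflation route faces the companion difficulty that the deflated block $\alp M'$ is no longer Gaussian after conditioning on a coneigenpair, so the clean recursion cannot be obtained from naive independence and must instead be extracted from the Jacobian. Verifying that these pieces combine to give exactly $\prod_{k=1}^{n-1}2^{-k}$ is where the real work lies.
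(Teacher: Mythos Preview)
Your reduction is sound and in fact matches the paper's: up to a null set, condiagonalizability coincides with contriangularizability, which is equivalent to $\sigma(\alp M\overline{\alp M})\subset[0,\infty)$. Your first suggested route---parametrize by a unitary consimilarity factorization, compute the Jacobian, and integrate the Gaussian---is exactly what the paper does. The detour through the real $2n\times 2n$ block form $R$ is unnecessary and, as you note, that ensemble is nonstandard; the deflation idea also fails for the reason you state (loss of Gaussianity after conditioning), so you should commit to the consimilarity route.

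The gap is that you have not carried out the computation, and that is the entire content of the theorem. Concretely: use the unitary contriangularization $A=URU^T$ with $U\in U(n)$ and $R$ upper triangular with ordered positive real diagonal $0<r_{11}<\cdots<r_{nn}$; this factorization is almost surely well defined and unique up to a sign matrix $D\in\{\pm 1\}^n$, giving a $2^n$ overcount. The Jacobian of $A\mapsto(U,R)$, computed via the skew-Hermitian form $[dH]=U^*[dU]$, contributes the factor $4^n\prod_i r_{ii}\prod_{i<j}(r_{jj}^2-r_{ii}^2)$. The integral then splits: the strict upper triangle of $R$ gives $(4\pi)^{n(n-1)/2}$, the unitary group contributes its volume $\prod_{j=1}^n (2\pi)^j/(j-1)!$, and the diagonal integral $\int e^{-\frac12\sum r_{ii}^2}\prod r_{ii}\prod_{i<j}|r_{jj}^2-r_{ii}^2|\,dr_{11}\cdots dr_{nn}$ is a Selberg integral evaluating to $n!\prod_{j=1}^{n-1}j!$. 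Assembling these against the normalizing constant $(4\pi)^{-n^2}$ yields $2^{-n(n-1)/2}$. None of these three evaluations is guessable from the factorization $\prod 2^{-k}$ alone; in particular the Selberg step is the one you are missing and should supply explicitly.
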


One should bear in mind that in practice matrices
possess a lot of structure (such as complex symmetry). 
Thereby, regarding the usage of the bounds of 
Section \ref{luku2} in applications,
this is certainly an overly pessimistic result. 

The rest of this section is dedicated to the proof of Theorem
\ref{condiagthm}.
The probability that a real $n$-by-$n$ matrix with standard normal
entries has only real eigenvalues has been shown to equal $2^{-n(n-1)/4}$
\cite{EDEL}. From Proposition \ref{contriprop} below it is easy to see
that a real matrix is condiagonalizable with the same probability.
For the complex matrices of Theorem \ref{condiagthm},
our computation of the probability proceeds similarly to \cite{EDEL}.

\subsection{Contriangularizable matrices}

We start by recalling basic facts on matrices and consimilarity needed in
the proof. A standard reference here is \cite[Chapter 4]{HJ1}.

\begin{definition} A matrix
$\alp M\in \setC^{n \times n}$ is said to be contriangularizable  
if there exists an invertible
matrix $X\in \setC^{n \times n}$ such that
\begin{equation}\label{contria}
\alp M= X\alp R \overline{X^{-1}}
\end{equation}
with an upper triangular matrix $\alp R$. 
\end{definition}

A matrix $\alp M$ is said to be unitarily contriangularizable if
$\alp M=U\alp R U^T$ with $U$  unitary and $\alp R$ upper triangular.

\begin{proposition}\label{contriprop} Suppose $\alp M \in\Cnxn$. Then
\begin{enumerate}
\item $\alp M$ is contriangularizable if and only if $\alp M$ is unitarily
contriangularizable if and only if all the eigenvalues of $\alp M\overline{\alp M}$ are real and nonnegative.
\item if %
$\alp M=U\alp R U^T$ with $U$  unitary and $\alp R$ upper triangular,
the absolute values of the diagonal entries
of $R$ are always the same, modulo ordering. The
diagonal entries of $\alp R$ can be permuted to any order and chosen
to be real and nonnegative.
\item\label{contridiag}
if $\alp M=U\alp RU^T$ with $U$  unitary and $\alp R$ upper triangular,
where the absolute values $|r_{11}|,|r_{22}|,\dots,|r_{nn}|$ 
of the diagonal entries of $\alp R$ are distinct,
then $\alp M$ is condiagonalizable. Moreover, the set of such matrices
$\alp M$ is open in $\Cnxn$.
\item\label{contriae} The set
\[\set{\alp M\in\Cnxn}{\alp M=U\alp RU^T \text{ with } |r_{ii}|=|r_{jj}| \text{ for some }
i \not= j}\]
is of measure zero. Hence almost all contriangularizable matrices are
condiagonalizable.
\end{enumerate}
\end{proposition}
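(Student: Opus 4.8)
The plan is to organize all four parts around the single consimilarity invariant $\alp M\overline{\alp M}$, exploiting the observation that for a \emph{unitary} $X=U$ one has $\overline{U^{-1}}=U^T$, so that a contriangularization \eqref{contria} effected by a unitary $X$ is exactly a factorization $\alp M=U\alp R\,U^T$. The computational backbone is the identity obtained by substituting \eqref{contria}: since $\overline{X^{-1}}\,\overline{X}=I$, one gets $\alp M\overline{\alp M}=X(\alp R\,\overline{\alp R})X^{-1}$, and $\alp R\,\overline{\alp R}$ is upper triangular with diagonal entries $r_{ii}\overline{r_{ii}}=|r_{ii}|^2$. Hence the eigenvalues of $\alp M\overline{\alp M}$ coincide with the numbers $|r_{ii}|^2$, and this invariant survives any consimilarity; I will use it both to read off invariants and to drive the induction.

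For part 1, the two easy implications are immediate: unitary contriangularizability trivially implies contriangularizability, and contriangularizability forces $\alp M\overline{\alp M}$ to be similar to the triangular $\alp R\,\overline{\alp R}$, whose eigenvalues $|r_{ii}|^2$ are real and nonnegative. The substance is the converse, a con-Schur reduction by induction on $n$. Assuming $\alp M\overline{\alp M}$ has only real nonnegative eigenvalues, I would pick an eigenvector $v$ with $\alp M\overline{\alp M}\,v=r^2v$, $r\ge 0$, and manufacture a \emph{nonnegative coneigenvector}: the vector $u=\alp M\overline{v}+rv$ satisfies $\alp M\overline{u}=ru$, and should it vanish one instead rotates $v$ by a unit scalar to turn a coneigenvalue $-r$ into $r$. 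Completing $u/\|u\|$ to a unitary $U_1$, the matrix $U_1^{*}\alp M\,\overline{U_1}$ has first column $re_1$, hence is block upper triangular with trailing block $\alp{M'}$; the backbone identity applied to $U_1^{*}\alp M\,\overline{U_1}$ (again using $\overline{U_1}\,U_1^{T}=I$) shows $\alp{M'}\,\overline{\alp{M'}}$ inherits real nonnegative eigenvalues, so the induction closes. I expect this nonnegative-coneigenvector step, together with the verification that the deflated block keeps the hypothesis, to be the main obstacle.

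Parts 2 and 3 then fall out of the same machinery. For part 2, the multiset $\{|r_{ii}|^2\}=\sigma(\alp M\overline{\alp M})$ is a consimilarity invariant, so the $|r_{ii}|$ are determined up to order; replacing $U$ by $UD$ with $D=\operatorname{diag}(e^{i\theta_k})$ unitary diagonal replaces $r_{kk}$ by $e^{-2i\theta_k}r_{kk}$ while preserving upper triangularity, so each diagonal entry can be rotated to be real and nonnegative, and any reordering is obtained by choosing which eigenvalue of $\alp M\overline{\alp M}$ to extract first in the inductive con-Schur step. For part 3, distinct $|r_{ii}|$ means $\alp M\overline{\alp M}$ has $n$ distinct (hence nonnegative real) eigenvalues and is therefore diagonalizable; condiagonalizability follows either from the standard criterion in \cite[Chapter 4]{HJ1} or directly, by clearing $\alp R$ to diagonal form one row at a time, each step requiring the solvability of a real-linear equation $Av-r_i\overline{v}=b$ in $v$, with $A$ the transpose of the trailing block, which is invertible precisely because $r_i^2$ is not among the eigenvalues $|r_{jj}|^2$ of $\alp{R'}\,\overline{\alp{R'}}$.

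Openness and part 4 rest on one extra fact: the spectrum of $\alp M\overline{\alp M}$ is always symmetric about $\setR$, because $\overline{\alp M\overline{\alp M}}=\overline{\alp M}\,\alp M$ has the same characteristic polynomial as $\alp M\overline{\alp M}$. Since $\alp M\mapsto\alp M\overline{\alp M}$ is continuous and having $n$ distinct eigenvalues is an open condition, for nearby $\alp M$ the eigenvalues stay distinct; conjugation symmetry then forbids a simple eigenvalue sitting near a \emph{positive} real value from leaving $\setR$ (its conjugate would be a second nearby eigenvalue), so it stays real and positive, giving openness of the set in part 3. This preservation of nonnegativity is the delicate point of the openness claim, and it is exactly what conjugation symmetry supplies. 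Finally, the set in part 4 is contained in the locus where the discriminant of the characteristic polynomial of $\alp M\overline{\alp M}$ vanishes; this discriminant is a polynomial in the real and imaginary parts of the entries of $\alp M$ that is not identically zero (diagonal matrices with distinct positive diagonal witness this), so its zero set has measure zero in $\Cnxn\cong\setR^{2n^2}$. Combined with part 3, which places every contriangularizable matrix off this locus into the condiagonalizable set, this yields that almost all contriangularizable matrices are condiagonalizable.
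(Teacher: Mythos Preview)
Your proposal is substantially correct and far more detailed than the paper's own proof, which is simply a two-line citation: ``The item (1) is \cite[Theorem 4.6.3]{HJ1} and the other claims follow readily from the results of \cite[Section 4.6]{HJ1}.'' What you have done is essentially reconstruct the relevant portion of \cite[Section 4.6]{HJ1} from scratch, organized around the consimilarity invariant $\alp M\overline{\alp M}$. The coneigenvector construction $u=\alp M\overline v+rv$, the induction for the con-Schur form, the diagonal-rotation trick $U\mapsto UD$, and the discriminant argument for part~4 are all sound and constitute a self-contained proof where the paper offers only a pointer.

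There is one small gap in your openness argument for part~3. You argue that a \emph{positive} simple real eigenvalue of $\alp M\overline{\alp M}$ cannot leave $\setR$ under perturbation, by conjugation symmetry. That is correct, but it does not address the case where one $|r_{ii}|=0$, i.e.\ where $\alp M\overline{\alp M}$ has a simple eigenvalue at $0$. Conjugation symmetry still forces the perturbed eigenvalue to be real, but you need to rule out that it becomes slightly negative. The missing observation is that a \emph{negative} real eigenvalue of $\alp M\overline{\alp M}$ always has multiplicity at least two: if $\alp M\overline{\alp M}v=\lambda v$ with $\lambda<0$, then $w=\alp M\overline v$ is also an eigenvector for $\lambda$, and $w=cv$ would force $\lambda=|c|^2\ge 0$. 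Hence a \emph{simple} eigenvalue of $\alp M\overline{\alp M}$ is never negative, so after a small perturbation the $n$ eigenvalues remain distinct, real, and nonnegative. With this addition, your argument closes.
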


\begin{proof}
The item (1) is \cite[Theorem 4.6.3]{HJ1}
and the other claims follow readily from the results of
\cite[Section 4.6]{HJ1}. 
\end{proof}

Proposition \ref{contriprop} \eqref{contriae} combined 
with Theorem \ref{condiagthm} yields the corollary that the  probability of a matrix being
contriangularizable is  $2^{-n(n-1)/2}$.

We next prove a uniqueness result which holds true for almost all contriangularizable
matrices. The following lemma is needed.

\begin{lemma}\label{consuurlemma}
Let $R,S\in\Cnxn$ be upper triangular matrices such that
$|r_{ii}| = |s_{ii}|$ and $|r_{ii}|\not=|r_{jj}|$ for all $i\not=j$.
If $U\in\Cnxn$ is a unitary matrix such that
\begin{equation}\label{consuur}
R\overline{U} = US
\end{equation}
then $U$ is a diagonal matrix.
\end{lemma}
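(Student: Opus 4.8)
The plan is to eliminate the conjugation in \eqref{consuur} by passing to a genuine (complex linear) similarity. Using that $U$ is unitary, I would rewrite \eqref{consuur} as $S=U^*R\overline{U}$ and then compute $S\overline{S}$. The identities $U^T\overline{U}=\overline{U}\,U^T=I$ (each equivalent to unitarity) collapse the interior factors and give $S\overline{S}=U^*(R\overline{R})U$; equivalently, setting $A=R\overline{R}$ and $B=S\overline{S}$, one has $AU=UB$. The point of this reduction is that $A$ and $B$ are upper triangular (being products of upper triangular matrices), with diagonal entries $a_{ii}=|r_{ii}|^2$ and $b_{ii}=|s_{ii}|^2=|r_{ii}|^2$. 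By hypothesis these coincide for each $i$ and are pairwise distinct. Thus squaring converts the ``equal modulus/distinct modulus'' assumption into the clean statement that $A$ and $B$ are upper triangular with the same pairwise distinct diagonal (equivalently, eigenvalues), listed in the same order.

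It then remains to show that a unitary $U$ intertwining two upper triangular matrices with identical, pairwise distinct, equally ordered diagonals must be diagonal. I would argue this by peeling off the last row. Writing $\lambda_i=a_{ii}=b_{ii}$ and reading off the $(n,j)$ entry of $AU=UB$ gives $\lambda_n u_{nj}=\sum_{k\le j}u_{nk}b_{kj}$; inducting on $j$ starting from $j=1$ and using $\lambda_n\neq\lambda_j$ forces $u_{nj}=0$ for every $j<n$. Hence the last row of $U$ is $(0,\dots,0,u_{nn})$ with $|u_{nn}|=1$. Orthonormality of the rows of the unitary $U$ then forces $u_{in}=0$ for $i<n$ as well, so $U$ splits as the block diagonal matrix $\operatorname{diag}(U',u_{nn})$ with $U'$ an $(n-1)\times(n-1)$ unitary matrix.

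Restricting $AU=UB$ to the leading $(n-1)\times(n-1)$ blocks, where the decoupling of the last coordinate makes the off-diagonal contributions vanish, yields $A'U'=U'B'$ with $A',B'$ the leading principal submatrices. These are again upper triangular with the same pairwise distinct ordered diagonal $\lambda_1,\dots,\lambda_{n-1}$, so an induction on $n$ (the case $n=1$ being trivial) shows $U'$ is diagonal, whence $U$ is diagonal.

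The only genuinely substantive step is the opening reduction: recognising that forming $R\overline{R}$ and $S\overline{S}$ turns the semilinear relation \eqref{consuur} into an ordinary similarity $AU=UB$ while simultaneously promoting the hypotheses $|r_{ii}|=|s_{ii}|$ and $|r_{ii}|\neq|r_{jj}|$ into the statement that $A$ and $B$ share a simple, equally ordered spectrum. Once this is available, the rest is the familiar fact that a unitary similarity between upper triangular matrices with equal, simple, equally ordered diagonals is diagonal, established by the row-deflation induction above. The only bookkeeping to keep straight is the handling of conjugates and transposes in verifying $U^T\overline{U}=\overline{U}\,U^T=I$, but this is routine.
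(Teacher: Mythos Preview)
Your argument is correct, and it takes a genuinely different route from the paper's. The paper invokes Proposition~\ref{contriprop}\,\eqref{contridiag} to condiagonalize $R$ and $S$ by \emph{upper triangular} matrices $X,Y$, writing $R=X^{-1}D\overline{X}$ and $S=YD\overline{Y}^{-1}$ with $D=\operatorname{diag}(|r_{11}|,\dots,|r_{nn}|)$; substituting into \eqref{consuur} gives $D\,\overline{XUY}=(XUY)D$, which forces $E=XUY$ to be diagonal since the $d_{ii}$ are distinct, and then $U=X^{-1}EY^{-1}$ is upper triangular and unitary, hence diagonal. Your proof instead linearizes the consimilarity by forming $A=R\overline{R}$ and $B=S\overline{S}$, turning \eqref{consuur} into the ordinary intertwining $AU=UB$ between upper triangular matrices with the common simple ordered diagonal $(|r_{11}|^2,\dots,|r_{nn}|^2)$, and then runs a direct row-deflation induction. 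The paper's version is shorter once the condiagonalization machinery is in hand and fits the surrounding narrative, but it tacitly uses that the condiagonalizing matrices can be taken upper triangular. Your approach is fully self-contained, avoids Proposition~\ref{contriprop} altogether, and the ``square to remove the conjugation'' step is a clean, reusable device for such semilinear identities.
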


\begin{proof}
By Proposition \ref{contriprop} \eqref{contridiag},
$R$ and $S$ are condiagonalizable
and we can find upper triangular invertible matrices $X,Y\in\Cnxn$ such that
\[R=X^{-1}D\overline{X},\qquad S=YD\overline{Y}^{-1},\]
where $D$ is the real diagonal matrix such that $d_{ii}=|r_{ii}|$.
Substituting into \eqref{consuur} we find
\[D\overline{X}\overline{U}\overline{Y} = XUYD.\]
Denoting $E=XUY$, we see that $E$ must be diagonal since $d_{ii}$ are distinct.
Hence $U=X^{-1}EY^{-1}$
is upper triangular and therefore diagonal since $U$ is unitary.
\end{proof}

\begin{proposition}\label{contriuniq}
Let $\alp M\in\Cnxn$ and suppose $\alp M=U\alp RU^T=V \alp SV^T$,
where $U,V$ are unitary, $\alp R,\alp S$ are upper triangular with the same
diagonal consisting of distinct real and positive entries.
Then there exists a diagonal matrix $D\in\Rnxn$ with $\pm 1$ diagonal
entries such that
\begin{equation}\label{contriauniqeq}
\begin{aligned}
U&=VD,\\
\alp R&=D\alp SD.
\end{aligned}
\end{equation}
\end{proposition}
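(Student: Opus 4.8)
The plan is to collapse the two unitary factorizations into a single consimilarity-type relation for the unitary matrix $W=V^*U$, and then read off the conclusion from Lemma \ref{consuurlemma}. Writing $\alp M=U\alp RU^T=V\alp SV^T$ and isolating $\alp S$ should produce a relation of the form $W\alp R W^T=\alp S$, at which point the distinctness of the diagonal moduli lets the lemma pin down $W$.

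First I would left-multiply the identity $U\alp RU^T=V\alp SV^T$ by $V^*$ and right-multiply by $\overline V$. The bookkeeping needed is $(V^*)^T=\overline V$, hence $W^T=U^T\overline V$, together with the identity $V^T\overline V=I$ valid for any unitary $V$ (it follows from $\overline{V^*V}=\overline I=I$). These reduce the relation to
$$W\alp R W^T=\alp S.$$
Right-multiplying this by $\overline W$ and using $W^T\overline W=I$ gives $W\alp R=\alp S\overline W$, that is $\alp S\overline W=W\alp R$.

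Next I would invoke Lemma \ref{consuurlemma} with the lemma's $R,S$ taken to be $\alp S,\alp R$ and its unitary matrix taken to be $W$: the relation $\alp S\overline W=W\alp R$ is exactly the hypothesis $R\overline U=US$, both $\alp R$ and $\alp S$ are upper triangular, they share the same diagonal (so $|(\alp S)_{ii}|=|(\alp R)_{ii}|$), and these diagonal moduli are distinct by assumption. The lemma therefore forces $W$ to be diagonal.

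Finally, with $W=\mathrm{diag}(w_1,\dots,w_n)$ unitary, I would compare the $(i,i)$ entries of $W\alp R W^T=\alp S$. Since $W$ is diagonal this entry equals $w_i^2(\alp R)_{ii}$, which must match $(\alp S)_{ii}=(\alp R)_{ii}$; as the common diagonal entries are real and positive, $w_i^2=1$, so $w_i=\pm 1$. Thus $W=D$ is real diagonal with $\pm 1$ entries, giving $U=VW=VD$ and $\alp R=W^{-1}\alp S(W^T)^{-1}=D\alp SD$ because $D^2=I$. The only genuinely delicate step is the conjugate/transpose accounting in the reduction, which must land the relation in exactly the shape demanded by Lemma \ref{consuurlemma}; once that is arranged, everything else is routine.
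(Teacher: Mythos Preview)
Your proof is correct and follows essentially the same route as the paper: reduce $U\alp RU^T=V\alp SV^T$ to a relation of the form required by Lemma~\ref{consuurlemma}, conclude the intertwining unitary is diagonal, and then read off $\pm1$ entries from the shared positive diagonal. The only cosmetic difference is that the paper works with $U^*V$ and the relation $\alp R\,\overline{U^*V}=U^*V\,\alp S$, whereas you work with $W=V^*U$ and the swapped relation $\alp S\,\overline W=W\alp R$; these are the same argument up to interchanging the roles of $\alp R$ and $\alp S$.
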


\begin{proof}
From the assumptions we get $\alp R\overline{U^*V} = U^*V\alp S$. By
Lemma \ref{consuurlemma} the matrix $D=U^*V$ is diagonal and we see
that $\alp R=D\alp S D$. Since $\alp R$ and $\alp S$ have
the same nonzero diagonal, the diagonal of $D$ must have $\pm 1$ entries.
\end{proof}

\subsection{Proof of Theorem \ref{condiagthm}}
Since the manipulations that follow require heavily using matrix indices, we
denote the matrix $\alp M$ of Theorem \ref{condiagthm} by $A$.  

The computation of the probability involves evaluating the integral
\begin{equation}\label{pnint}
p_n = \frac{1}{(-4\pi i)^{n^2}}\int_{\mathcal{D}} e^{-\frac{1}{2}\tr(A^*A)}
dA\wedge d\overline{A},
\end{equation}
where $dA = \bigwedge_{i,j=1}^n da_{ij}$ and $\mathcal{D}$ is the set
of condiagonalizable matrices that possess $n$ positive
and distinct coneigenvalues.

To compute $p_n$ we perform the change of variables $A=URU^T$,
where $U$ is unitary, $R\in\mathcal{R}$ and
\[\mathcal{R} = \set{R\in\Cnxn}{R\text{ is upper triangular and }
0 < r_{11} < \cdots < r_{nn}}.\]
To calculate the corresponding Jacobian we
use the notation $[dB]$ to denote the $n\times n$-matrix of the
differential forms $db_{ij}$. Since only the absolute value of the
Jacobian is of interest, in the following we will ignore unconsequential
sign changes due to the anti-commutativity of the wedge product. Also,
we shall ignore the imaginary unit in the volume form, i.e. for 
$z=x+iy$ we write $dz\wedge d\overline{z} = 2\,dx \wedge dy$.
Then
\begin{align*}
[dA] &= [dU]RU^T + U[dR]U^T + UR[dU]^T \\
&=U([dR] + U^*[dU]R + R[dU]^T\overline{U})U^T.
\end{align*}
Denoting
\[[dH] = U^*[dU]\]
we have $[dH]$ skew-Hermitian and therefore
\[[dA] = U[dM]U^T, \quad \text{where} \qquad
[dM] = [dR] + [dH]R - R[d\overline{H}].\]
Hence
\[dA = \det(U)^{2n}dM\]
and
\begin{equation}\label{daeqdm}
dA \wedge d\overline{A} = dM \wedge d\overline{M}.
\end{equation}
We now divide the calculation to three cases
\begin{equation}\label{dmdconjm}
dM \wedge d\overline{M} =
\bigwedge_{i > j} (dm_{ij} \wedge d\overline{m_{ij}}) \wedge
\bigwedge_{i} (dm_{ii} \wedge d\overline{m_{ii}}) \wedge
\bigwedge_{i < j} (dm_{ij} \wedge d\overline{m_{ij}}).
\end{equation}
Suppose first that $i > j$. Then
\begin{equation}\label{dmij1}
dm_{ij} = dh_{ij}r_{jj} - r_{ii}d\overline{h_{ij}}
+ \sum_{k < j} dh_{ik}r_{kj} - \sum_{k > i}r_{ik}d\overline{h_{kj}}.
\end{equation}
Actually
\begin{equation}\label{dmij2}
\bigwedge_{i > j} (dm_{ij} \wedge d\overline{m_{ij}}) =
\bigwedge_{i > j} (r_{jj}^2 - r_{ii}^2)dh_{ij}\wedge d\overline{h_{ij}}.
\end{equation}
To see this, first note that
\begin{equation}
(dh_{ij}r_{jj} - r_{ii}d\overline{h_{ij}}) \wedge
(d\overline{h_{ij}}r_{jj} - r_{ii}dh_{ij})
= (r_{jj}^2 - r_{ii}^2)dh_{ij}\wedge d\overline{h_{ij}}.
\end{equation}
That the last two summations in \eqref{dmij1} make no contribution
to \eqref{dmij2}, consider ordering their terms first by the increasing
second index $v$ of $dh_{uv}$ (and $d\overline{h_{uv}}$) and then by the
decreasing first index $u$. The elimination starts with $dh_{n1}$ (and
$d\overline{h_{n1}}$) and proceeds in the described order. We repeatedly
use the reduction
\begin{align*}
\bigwedge_{i > j} (dm_{ij} \wedge d\overline{m_{ij}}) &=
\omega_1 \wedge (\omega_2 + \gamma\, dh_{uv}) \wedge
(r_{vv}^2 - r_{uu}^2)dh_{uv}\wedge d\overline{h_{uv}} \\ 
&= \omega_1 \wedge \omega_2 \wedge
(r_{vv}^2 - r_{uu}^2)dh_{uv}\wedge d\overline{h_{uv}},
\end{align*}
where $\omega_1,\omega_2$ are some differential forms and $\gamma$ is
$\pm$ some entry of $R$.

We next consider the case $i=j$ in \eqref{dmdconjm}. Now
\begin{equation}\label{dmii1}
dm_{ii} = dr_{ii} + 2r_{ii}dh_{ii}
+ \sum_{k < i} dh_{ik}r_{ki} - \sum_{k > i} r_{ik}d\overline{h_{ki}}.
\end{equation}
We get
\begin{equation}\label{dmii2}
\begin{aligned}
\bigwedge_{i > j} (dm_{ij} \wedge d\overline{m_{ij}}) &\wedge
\bigwedge_{i} (dm_{ii} \wedge d\overline{m_{ii}}) \\ &=
\bigwedge_{i > j} (dm_{ij} \wedge d\overline{m_{ij}})
\wedge \bigwedge_i (4r_{ii} dh_{ii} \wedge dr_{ii}),
\end{aligned}
\end{equation}
since the terms in the last two summations in \eqref{dmii1} are eliminated
due to \eqref{dmij2}.

The remaining case is $i < j$. Now
\begin{equation}\label{dmij10}
dm_{ij} = dr_{ij} + \sum_{k < j} dh_{ik}r_{kj} -
\sum_{k > i} r_{ik}d\overline{h_{kj}}.
\end{equation}
All terms in the last two summations are now eliminated due to
\eqref{dmii2} so that we finally get
\begin{equation}\label{dmvolform}
\begin{aligned}
dM \wedge d\overline{M} &=
4^n \prod_i r_{ii} \prod_{i < j} (r_{jj}^2 - r_{ii}^2)
\bigwedge_{i < j} (dh_{ij} \wedge d\overline{h_{ij}} \wedge
dr_{ij} \wedge d\overline{r_{ij}}) \wedge \\ &
\bigwedge_i (dh_{ii} \wedge dr_{ii}).
\end{aligned}
\end{equation}
We then use \eqref{daeqdm} to compute the integral \eqref{pnint}
by integrating over the unitary group and the upper triangular matrices $R$
\[p_n = \frac{1}{2^n(4\pi)^{n^2}} \int_{U(n)\times \mathcal{R}}
e^{-\frac{1}{2}\tr(R^*R)}\,dM\wedge d\overline{M},\]
where the factor $2^n$ corresponds to the fact that by Proposition
\ref{contriuniq} integration over $U(n)\times \mathcal{R}$ counts all
matrices $A$ precisely $2^n$ times.

The volume of the unitary group \cite[Proposition 4.1.14]{AGZ} is
\[\int \bigwedge_{i < j} (dh_{ij} \wedge d\overline{h_{ij}}) \wedge
\bigwedge_i dh_{ii} = \prod_{j=1}^n \frac{(2\pi)^j}{(j-1)!}.
\]
The integral over the strict upper triangular part of $R$ is
\[\int e^{-\frac{1}{2}\sum_{i < j} |r_{ij}|^2} \bigwedge_{i < j}
(dr_{ij} \wedge d\overline{r_{ij}}) = (4\pi)^{n(n-1)/2}.\]
The integral over the diagonal of the matrices $R$ can be computed using
Selberg's integral \cite[Formula 17.6.6]{MEH}
\begin{align*}
&\int_{\mathcal{\diag(R)}}
e^{-\frac{1}{2}\sum_i {r_{ii}^2}}
\prod_i r_{ii} \prod_{i < j} (r_{jj}^2 - r_{ii}^2)\,
dr_{11} \cdots dr_{nn} \\
&= \frac{1}{n!} \int_0^\infty \cdots \int_0^\infty
e^{-\frac{1}{2}\sum_i {r_{ii}^2}}
\prod_i r_{ii} \prod_{i < j} |r_{jj}^2 - r_{ii}^2|\,
dr_{11} \cdots dr_{nn} = \prod_{j=1}^{n-1} j!
\end{align*}
Hence
\begin{equation*}
p_n = \frac{1}{2^n(4\pi)^{n^2}}
4^n (2\pi)^{n(n+1)/2} (4\pi)^{n(n-1)/2}
= 2^{-n(n-1)/2}.
\end{equation*}

\section{Complex symmetry, orthogonal polynomials 
and three term recurrence}\label{luku4}  
The connection between the Hermitian Lanczos method, 
Hermitian Jacobi, i.e., Hermitian tridiagonal matrices  and
orthogonal polynomials is standard material in numerical linear algebra
and classical analysis;
see, e.g.,  \cite{Gau, GME}
and \cite{SZE,SIBOOK}.

Condiagonalizability is a special property which implies that a linear
algebra problem turns into a problem in classical approximation theory. 
In what follows, an analogous connection
for antilinear operators involving a complex symmetric matrix is described. 
For complex symmetric matrices, see 
the classical publications listed in \cite[p. 218]{HJ1}.
See also \cite{GP} and references therein
for complex symmetric operators on separable Hilbert spaces.

\subsection{Construction}\label{lequ}  
For the connection, consider an antilinear operator
$$%
\alp M\tau$$
on $\setC^n$ involving a complex symmetric matrix $\alp M$.  
(We could equally well consider $\mathcal{M}_{\kappa}$
but for the simplicity of the presentation, 
we set $\kappa=0$.)
Take a unit vector $b\in \setC^{n}$.
Then executing 
the real linear Arnoldi method yields us a tridiagonal complex symmetric
matrix, i.e., a complex symmetric Jacobi matrix 
because of the following fact.

\begin{proposition}\cite{EHV} If $\alp{M}^T=c \alp M$ with $c =\pm 1$,
then the real linear Arnoldi method is realizable with a three term recurrence.
\end{proposition}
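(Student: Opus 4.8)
The plan is to show that the upper Hessenberg matrix produced by the real linear Arnoldi method inherits the (skew-)symmetry of $\alp M$ and is therefore tridiagonal, which is precisely what realizability by a three term recurrence means.

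First I would recall, from the generation of the Krylov subspaces, that if $Q$ is the unitary matrix whose columns are the orthonormal basis vectors $q_1,\ldots,q_n$ produced by the real linear Arnoldi method, then the antilinear operator $\alp M\tau$ is represented in this basis by $H\tau$ with $H=Q^*\alp M\overline{Q}$. I would then argue that $H$ is upper Hessenberg exactly as in the classical linear case: since $q_j\in\mathcal{K}_j(\mathcal{M}_0;b)$ and $\mathcal{M}_0\mathcal{K}_j(\mathcal{M}_0;b)\subseteq\mathcal{K}_{j+1}(\mathcal{M}_0;b)={\rm span}\{q_1,\ldots,q_{j+1}\}$, the vector $\mathcal{M}_0q_j=\alp M\overline{q_j}$ has nonzero coordinates only in rows $1,\ldots,j+1$. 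Those coordinates are precisely the $j$th column of $H$, because $\alp M\overline{q_j}=\alp M\overline{Q}e_j=QHe_j$; hence $H_{ij}=0$ for $i>j+1$.

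The key computation comes next. Using $\overline{Q}^T=Q^*$ and therefore $(Q^*)^T=\overline{Q}$, I would transpose $H$ to obtain
$$H^T=\overline{Q}^T\alp M^T(Q^*)^T=Q^*\alp M^T\overline{Q}=c\,Q^*\alp M\overline{Q}=cH,$$
where the hypothesis $\alp M^T=c\alp M$ enters in the third equality. Thus $H$ inherits the symmetry ($c=1$) or skew-symmetry ($c=-1$) of $\alp M$.

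Finally I would combine the two structural facts: an upper Hessenberg matrix satisfying $H^T=cH$ with $c=\pm1$ must be tridiagonal, since $H_{ij}=0$ for $i>j+1$ by the Hessenberg property, while for $j>i+1$ the relation $H_{ij}=cH_{ji}$ forces $H_{ij}=0$ as well (the entry $H_{ji}$ then lies below the first subdiagonal). A tridiagonal $H$ gives $\mathcal{M}_0q_j=\alp M\overline{q_j}=H_{j-1,j}q_{j-1}+H_{jj}q_j+H_{j+1,j}q_{j+1}$, so that $q_{j+1}$ is determined by $q_j$, $q_{j-1}$ and $\mathcal{M}_0q_j$ alone, i.e. a three term recurrence. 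I expect the only delicate point to be tracking the conjugations through the transpose so that the (skew-)symmetry of $\alp M$ genuinely transfers to $H$; the antilinearity of $\mathcal{M}_0$ causes no difficulty, since the recurrence coefficients are simply read off from the columns of the ordinary matrix $H$.
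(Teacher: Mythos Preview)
Your argument is correct. The computation $H^T=\overline{Q}^T\alp M^T(Q^*)^T=Q^*\alp M^T\overline{Q}=cH$ is exactly right (using $\overline{Q}^T=Q^*$ and $(Q^*)^T=\overline{Q}$), and combined with the Hessenberg structure it forces $H$ to be tridiagonal, giving the three term recurrence.

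Note, however, that the paper does not supply its own proof of this proposition: it is simply quoted from \cite{EHV}, so there is nothing to compare your approach against here. Your argument is the standard one and almost certainly coincides with what is done in \cite{EHV}; the only thing one might add for completeness is that the subdiagonal entries $H_{j+1,j}$ are real and nonnegative by construction of the Arnoldi process, which in the skew-symmetric case $c=-1$ forces the diagonal entries $H_{jj}=-H_{jj}$ to vanish as well.
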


Because of the way the real linear Arnoldi method proceeds,
in the resulting tridiagonal complex symmetric matrix 
there can appear complex entries only on the diagonal. In
what follows, when
$c=1$, the real linear Arnoldi method
is called the complex symmetric Lanczos method.

As in the proof of Theorem
\ref{ylara}, choose a unitary matrix $U$ such that
\begin{equation}\label{ehto}
\alp D=U^*\alp M\overline{U}\, \mbox{ is diagonal and }\, r=U^*b\in \setR^n
\end{equation}
holds. Then $\mathcal{K}_j(\alp M\tau;b)$ is unitarily equivalent to
$\mathcal{K}_j(\alp D\tau;r)$ in the sense of 
Proposition \ref{sfa}. For the latter Krylov subspace, the conjugations
affect $\alp D$ only, yielding polynomials in
$\alp D$ and $\overline{\alp D}$ which correspond to elements
of $\mathcal{P}_j(2r)$ in a natural way. 

We assume that for any triple of the nonzero coneigenvalues of $\alp M$,    
at most two of them can share the same modulus, and, if zero is 
a coneigenvalue, it appears just once. 
This assumption holds generically.
Moreover, we assume the starting vector $b \in \setC^n$ to be generic
in the sense that the eigenvalues of $\alp D$ are distinct
and all the entries of $r$ are strictly positive. 

By Proposition \ref{sfa} (and the comment
that follows), the Jacobi matrix computed by the 
complex symmetric Lanczos method  with
$\alp M\tau$ using the starting vector $b$ yields the same 
Jacobi matrix as when executed with $\alp D\tau$ using the
starting vector $r$. 
(Of course, the orthonormal bases generated differ
according to Proposition \ref{sfa}.)
Denote the entries of this matrix as 
\begin{equation}\label{jtri}
\alp J=\left[ \begin{array}{ccccc} 
\alpha_1&\beta_1&0&\cdots&0\\
\beta_1&\alpha_2&\ddots&\cdots&0 \\
\vdots&\ddots&\ddots& \ddots&\vdots \\
0&\ldots&&\alpha_{n-1}&\beta_{n-1}\\
0&\ldots&0&\beta_{n-1}&\alpha_n
\end{array} \right],
\end{equation}
so that the corresponding antilinear operator is $\alp J \tau$.
The complex symmetric Lanczos method is devised in such a way that
the entries satisfy $\alpha_j\in \setC$ and $\beta_j>0$
assuming the method does not break down. (When the  
classical Hermitian Lanczos method is executed, 
the respective entries 
satisfy $\alpha_j\in \setR$ and $\beta_j>0$.)

For the converse, assume given $\alp J$ and the task is construct 
a diagonal matrix $ D$ and a real vector $v$
giving $\alp J$ after executing the complex symmetric
Lanczos method. This can be accomplished
by computing a unitary matrix $V$ whose first column $v$ is real
such that $\alp J\tau =V^* D\overline{V}\tau$ with a diagonal matrix $ D$.

We have lack of uniqueness in the case there appears two
coneigenvalues of the same modulus. In \eqref{ehto} this takes place
since for any isometric $V\in\C^{n\times 2}$ we have $VV^T = (VR)(VR)^T$
for all orthogonal matrices $R\in\R^{2\times 2}$. For the sake of completeness,
the following proposition contains the converse.

\begin{proposition} 
Suppose
$UU^T=VV^T$ for two isometric matrices $U,V\in \setC^{n \times m}$.
Then $V=UR$ for a unitary matrix $R\in \setR^{m \times m}$. 
\end{proposition}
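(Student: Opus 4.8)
The plan is to exhibit the matrix $R$ explicitly and then verify its two defining properties, unitarity and reality, separately. The key preliminary observation is that an isometric matrix $U$ satisfies $U^*U = I_m$, and conjugating this identity gives the companion relation $U^T\overline{U} = I_m$; the same holds for $V$. The relation $V^T\overline{V} = I_m$ is what makes the whole argument work, so I would record it at the outset.

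First I would produce $R$. Multiplying the hypothesis $UU^T = VV^T$ on the right by $\overline{V}$ and using $V^T\overline{V} = I_m$ collapses the right-hand side to $V$, giving $UU^T\overline{V} = V$. Hence, setting $R := U^T\overline{V} \in \setC^{m\times m}$, we obtain $V = UR$, the desired factorization with an as-yet-unconstrained $R$.

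Next I would show $R$ is unitary and, separately, that $RR^T = I_m$. Unitarity is immediate from $V$ being isometric: $I_m = V^*V = R^*U^*UR = R^*R$, and $R$ is square. For the bilinear relation, I would substitute $V = UR$ into $VV^T = UU^T$ to get $URR^TU^T = UU^T$, then strip off $U$ on the left (using $U^*U = I_m$) and $\overline{U}$ on the right (using $U^T\overline{U} = I_m$) to conclude $RR^T = I_m$.

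The decisive, and only slightly subtle, step is combining the two. From $R^*R = I_m$ we have $R^* = R^{-1}$, and from $RR^T = I_m$ we have $R^T = R^{-1}$; hence $R^* = R^T$, that is $\overline{R} = R$, so $R$ is real. A real unitary matrix is orthogonal, so $R \in \setR^{m\times m}$ is the required unitary matrix. I expect the main obstacle to be conceptual rather than computational: one must resist working only with the Hermitian inner product, which yields unitarity but never reality, and instead exploit the genuinely bilinear identity $UU^T = VV^T$, whose conjugate-free transpose structure is precisely what pins $R$ down to the reals.
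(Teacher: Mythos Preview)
Your proof is correct and follows essentially the same approach as the paper: both take the candidate $R = U^T\overline{V} = U^*V$ and exploit the isometry identities $U^*U = I_m$, $U^T\overline{U} = I_m$ together with the hypothesis $UU^T = VV^T$. The only difference is organizational: the paper establishes reality in a single chain $U^*V = U^*VV^T\overline{V} = U^*UU^T\overline{V} = U^T\overline{V} = \overline{U^*V}$, whereas you arrive at $\overline{R}=R$ by separately deriving $R^*R = I_m$ and $RR^T = I_m$ and combining them.
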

\begin{proof}
We have $U^*V = U^*VV^T\overline{V} = U^*UU^T\overline{V} = U^T\overline{V}
=\overline{U^*V}$. Take $R=U^*V$.
\end{proof}

For orthogonal polynomials, associate with each point $\lambda_j\in \sigma(\alp D)$ 
the weight $r_j^{2}$, where $r_j$ is the
$j$th entry of the vector $r$. Denote by $\inprod{\cdot,\cdot}$ the standard 
Euclidean inner product
on $\setC^n$. Then an inner product on 
$\mathcal{P}_j(r2)$
corresponding to the complex symmetric Lanczos method is  
defined as 
$$\inprod{p,q}=\inprod{p(\alp M\tau)b,q(\alp M\tau)b} $$$$
=\inprod{p(\alp D\tau)r,q(\alp D\tau)r}=
\sum_{k=1}^np(\lambda_k)\overline{q(\lambda_k)}r_k^2,$$ 
where we used 
$p(\alp M\tau)b= 
\sum_{k=0}^{j}\alpha_k(\alp M\tau)^kb=
U\sum_{k=0}^{j}\alpha_k(\alp D\tau )^k r$$
= U\sum_{k=0}^{\lfloor \frac{j}{2}\rfloor}
(\alpha_{2k}+\alpha_{2k+1}\alp D)(\alp D \overline{ \alp D} )^{k}r$
and similarly for $q(\alp M\tau)b$.

Consider the (discrete) monomial functions in \eqref{mono}.
In terms of the Jacobi matrix entries in \eqref{jtri},   
the three term recurrence
for computing the  respective orthogonal polynomials
can be expressed as   
\begin{equation}\label{orhopol}
 \begin{array}{rll} 
p_0(\lambda)&=&1\\
\beta_1p_1(\lambda)&=&\lambda \overline{p_0(\lambda)}-\alpha_1p_0(\lambda) \\
\beta_2p_2(\lambda)&=&\lambda \overline{p_1(\lambda)}
-\alpha_2p_1(\lambda)-\beta_1p_0(\lambda) \\
\beta_3p_3(\lambda)&=&\lambda \overline{p_2(\lambda)}
-\alpha_3p_2(\lambda)-\beta_2p_1(\lambda) 
\end{array} 
\end{equation}
and so on. Note that the assumption of $\alp D$ having distinct eigenvalues
with any triple of them having at most two values of the same modules
together with $r_j^2 >0$ for all $j$ and
Proposition \ref{zeroprop} below implies that the
Lanczos method does not break down.

A number $\lambda\in\C$ is called a zero of $p\in\mathcal{P}(r2)$ if
$p(\lambda)=0$. 
\begin{proposition}\label{zeroprop}
Let $p \in \mathcal{P}_j(r2)$ be nonzero. The following claims hold:
\begin{enumerate}
\item\label{zerit1} If $p$ has two distinct zeroes of the same modulus,
then all numbers of that modulus are zeroes.
\item Let $m$ be the number of moduli for which all numbers of that modulus are
zeroes and let $s$ be the number of moduli for which exactly one number
is a zero. Then $2m+s \leq j$.
\end{enumerate}
\end{proposition}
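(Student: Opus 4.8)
The plan is to restrict $p$ to the circles $|\lambda|=r$ and thereby reduce both claims to elementary statements about a single real polynomial in the variable $t=r^2$. On the circle $|\lambda|=r$ each factor $|\lambda|^{2k}$ equals the constant $r^{2k}$, so writing $\lambda=re^{i\theta}$ collapses \eqref{polap} to
\[
p(re^{i\theta})=A(r)+B(r)e^{i\theta},\qquad A(r)=\sum_{k}\alpha_{2k}r^{2k},\quad B(r)=\sum_{k}\alpha_{2k+1}r^{2k+1}.
\]
For claim (1), suppose $p$ vanishes at two distinct points $re^{i\theta_1},re^{i\theta_2}$ of the circle. Subtracting $A(r)+B(r)e^{i\theta_1}=0$ from $A(r)+B(r)e^{i\theta_2}=0$ gives $B(r)(e^{i\theta_1}-e^{i\theta_2})=0$, whence $B(r)=0$ and then $A(r)=0$; thus $A(r)+B(r)e^{i\theta}\equiv0$ in $\theta$ and every number of modulus $r$ is a zero.

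For claim (2) I would first record the trichotomy on a circle $|\lambda|=r$ with $r>0$: it consists entirely of zeros iff $A(r)=B(r)=0$, it contains exactly one zero iff $|A(r)|=|B(r)|\neq0$, and it contains no zero otherwise; in particular any modulus carrying at least one zero satisfies $|A(r)|=|B(r)|$. This motivates the auxiliary function $G(t):=|A(r)|^2-|B(r)|^2$. Writing $\tilde A(t)=\sum_k\alpha_{2k}t^k$ and $\tilde B(t)=\sum_k\alpha_{2k+1}t^k$, so that $A(r)=\tilde A(r^2)$ and $B(r)=r\,\tilde B(r^2)$, one checks that $|A(r)|^2=:a(t)$ and $|B(r)|^2=:t\,b(t)$ are genuine polynomials in $t=r^2$ with real coefficients, each being a polynomial times its coefficient-conjugate; hence $G(t)=a(t)-t\,b(t)$ is a real polynomial.

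The heart of the argument is a degree count. Since $a$ has even degree $2\deg\tilde A$ and $t\,b$ has odd degree $2\deg\tilde B+1$, both with positive leading coefficients, the leading terms can never cancel; consequently $G\not\equiv0$ for every $p\neq0$ and $\deg G=\max(2\deg\tilde A,\,2\deg\tilde B+1)$. Using $\deg\tilde A\leq\lfloor j/2\rfloor$, together with the fact that $\deg\tilde B$ is one lower when $j$ is even (there $\alpha_{j+1}=0$), a short case distinction on the parity of $j$ yields $\deg G\leq j$ in both cases. I would then track multiplicities: a full circle of zeros at radius $r_0>0$ forces $\tilde A(t_0)=\tilde B(t_0)=0$ at $t_0=r_0^2$, and since $a$ and $b$ each factor as a polynomial times its coefficient-conjugate, a real zero of $\tilde A$ or $\tilde B$ is automatically a \emph{double} zero of $a$ or $b$; thus $t_0$ is a root of $G$ of multiplicity at least two, whereas a circle with exactly one zero merely gives $G(t_0)=0$. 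Distinct moduli give distinct $t_0$, so summing multiplicities shows that $2m+s$ is at most the number of roots of $G$ counted with multiplicity, i.e. at most $\deg G\leq j$.

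The main obstacle I anticipate is bookkeeping rather than conceptual: one must nail down the parity/non-cancellation claim that simultaneously forces $\deg G\leq j$ and $G\not\equiv0$, and one must treat the modulus $r=0$ separately, since there the ``circle'' degenerates to the single point $0$. When $\alpha_0=0$ this point is a zero and is naturally counted among the $s$ moduli (exactly one number of that modulus is a zero); it contributes $G(0)=0$ and one checks it does not upset the count. Granting these routine verifications, the inequality $2m+s\leq\deg G\leq j$ follows, which together with claim (1) is precisely what rules out a nonzero element of $\mathcal{P}_j(r2)$ having too many zeros and hence prevents breakdown of the Lanczos recurrence.
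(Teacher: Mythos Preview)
Your proof is correct and is essentially the same argument as the paper's. Both hinge on the auxiliary real polynomial
\[
G(t)=\tilde A(t)\,\overline{\tilde A}(t)-t\,\tilde B(t)\,\overline{\tilde B}(t)
\]
in $t=|\lambda|^2$ (the paper writes it as $\widetilde u\,\overline{\widetilde u}-|\lambda|^2\,\widetilde v\,\overline{\widetilde v}$), together with the parity observation that $\deg(\tilde A\overline{\tilde A})$ is even and $\deg(t\tilde B\overline{\tilde B})$ is odd, so $G\not\equiv 0$ and $\deg G\le j$. The only organizational difference is that the paper first factors the $m$ full-circle moduli out of $u$ and $v$, reducing their degrees by $m$, and then forms the auxiliary polynomial of degree $\le j-2m$ on which the $s$ single-zero moduli are simple roots; you instead form $G$ directly and observe that a real root $t_0$ of $\tilde A$ is automatically a root of the coefficient-conjugate polynomial $\overline{\tilde A}$ as well, so each full-circle modulus contributes a root of $G$ of multiplicity at least two. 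These are two packagings of the same computation, and your handling of the degenerate modulus $r=0$ (placing it in $s$, where it contributes $G(0)=a(0)=0$) is the correct reading of the statement.
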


\begin{proof}
Let $u$ and $v$ be (ordinary) polynomials of degrees
at most $\lfloor \frac{j}{2} \rfloor$
and $\lfloor \frac{j-1}{2} \rfloor$, respectively, such that
\begin{equation}\label{zereq1}
p(\lambda) = u(|\lambda|^2) + \lambda v(|\lambda|^2).
\end{equation}
By the assumption of Item \ref{zerit1}, there exist $\lambda_1$ and
$\lambda_2$ such that $\lambda_1 \not= \lambda_2,\,|\lambda_1|=|\lambda_2|$
and $p(\lambda_1) = p(\lambda_2) = 0$. This together with \eqref{zereq1}
implies $u(|\lambda_1|^2)=v(|\lambda_1|^2)=0$ proving the first claim.

Let $M_1,\dots,M_m$ be the moduli for which all numbers of these moduli
are zeroes.
By factoring, there exist (ordinary) polynomials
$\widetilde{u}$ and $\widetilde{v}$ such that
\begin{align*}
u(|\lambda|^2) &= \widetilde{u}(|\lambda|^2)
\prod_{i=1}^m (|\lambda|^2-M_i^2), \quad
v(|\lambda|^2) = \widetilde{v}(|\lambda|^2)
\prod_{i=1}^m (|\lambda|^2-M_i^2).
\end{align*}
Note that $\deg(\widetilde{u})\leq\lfloor \frac{j}{2} \rfloor - m$ and
$\deg(\widetilde{v}) \leq \lfloor \frac{j-1}{2} \rfloor - m$. Let $\lambda$
be a zero of $p$ such that no other number is a zero of the same modulus.
Then $\widetilde{u}(|\lambda|^2) + \lambda\widetilde{v}(|\lambda|^2) = 0$ and
\[\widetilde{u}(|\lambda|^2)\overline{\widetilde{u}(|\lambda|^2)} -
|\lambda|^2\widetilde{v}(|\lambda|^2)\overline{\widetilde{v}(|\lambda|^2)}
= 0,\]
where the left-hand side is a nonzero (ordinary) polynomial in $|\lambda|^2$ of
degree at most $j - 2m$. Hence $s\leq j - 2m$.
\end{proof}

Note that $p$ need not have any zeroes at all.

If $\sigma(\alp D)\subset \setR$ holds, then
the conjugations are vacuous and we have the classical symmetric 
Lanczos method
 \cite{PA}.\footnote{By the (classical) symmetric 
Lanczos method we mean the three term recurrence
for transforming a real symmetric matrix
into tridiagonal form.}
And conversely, if 
$\sigma(\alp D)\not\subset \setR$ holds, then
we have a natural extension of the symmetric Lanczos method
preserving the length of recurrence. Thereby, the numerical behaviour 
in finite precision, 
i.e., the loss of orthogonality among vectors computed can be expected 
to be similar to the classical symmetric Lanczos method. See 
\cite[Chapter 13.3]{PA} for the effects of finite precision then.

Certainly, complex symmetric Jacobi matrices  
can be treated in the $\setC$-linear setting \cite{BEC}. 
(Then one has to deal with formal orthogonal polynomials.)
However, we do not find it perhaps quite 
as natural as through the connection with the complex symmetric Lanczos 
method prescribed.

\subsection{Interpolation and least squares approximation} \label{inles} 
For the interpolation with the elements of $\mathcal{P}_{j}(r2)$, it is straightforward to
construct  Vandermonde-type matrices from
the monomials \eqref{mono}. (Numerically this is not advisable, though.) 
To understand their invertibility,
consider the case  
of having exactly two interpolation
points for each appearing modulus. 
That is, assume
there are $k$ different moduli $r_1>r_2 >\cdots >r_k$ and $2k$ points in all. Take the Lagrange
interpolation basis polynomials 
$$l_l(|\lambda|^2)=
\prod_{1 \leq m \leq k,\, m \not=l} 
\frac{|\lambda|^2-r_l^2}{r_l^2-r_m^2}.$$
Hence, $l_l(|\lambda_l|^2)=1$ while $l_l(|\lambda_m|^2)=0$ for $m\not=l$.
Now, for any two distinct interpolation nodes with  modulus $r_l$, take
the unique interpolating polynomial $p_2(\lambda)=c_l+d_l\lambda$. Then 
$p_2l_l\in \mathcal{P}_{j}(r2)$ with $j=2k-1$. Taking the sum of these
yields the required interpolant.

Consequently, the notation  $\mathcal{P}(r2)$ used is explained as follows. With the
elements of $\mathcal{P}_{j}(r2)$ we may interpolate at most two points on
a circle. Recall that with the radial polynomials
$\Sigma_{k=0}^ja_k|\lambda|^k$ one can interpolate
at most one point on a circle. 
Repeating this idea, it is clear how to define  $\mathcal{P}_{j}(rk)$
in such a way that we may interpolate at most $k$ points on
a circle. Hence we have a natural extension of radial functions. 

Since numerically computations involving orthogonal functions is 
preferable, interpolation with the elements of $\mathcal{P}_{j}(r2)$
should be performed by executing the complex symmetric Lanczos method 
just described. (Of course, for the classical symmetric 
Lanczos method this is a standard approach already from the
late 1950s \cite{FOR,Gau}.)
This is straighforward by choosing $\alp D$ with the
diagonal entries equaling the 
interpolation nodes and $r$ any unit vector supported at the nodes.

\subsection{Approximation of continuous functions on curves}
The interpolation scheme just presented suggests on what kind of
curves we can expect the approximation to be successful. 
For approximating continuous functions with the elements from
$\mathcal{P}(r2)$ we now give a generalization of the Weierstrass
approximation theorem. We start with the following lemma.

\begin{lemma}\label{jordanarclemma}
Let $\gamma \subset \C$ be a compact simple open curve such that
$\gamma$ intersects every origin centred circle in at most two points.
Then $\gamma$ can be extended to a simple closed curve $\widetilde{\gamma}
\subset \C$ such that $\gamma \subset \widetilde{\gamma}$ and
$\widetilde{\gamma}$ intersects every origin centred circle in at most
two points.
\end{lemma}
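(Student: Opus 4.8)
The plan is to extend the arc $\gamma$ at each of its two endpoints by appending suitable connecting arcs, taking care that the radial monotonicity structure forced by the ``at most two points per circle'' condition is respected. First I would analyze what the intersection condition says about $\gamma$ itself. Parametrizing $\gamma$ by arclength and writing $\rho(t) = |\gamma(t)|$ for the modulus, the hypothesis that every origin-centred circle meets $\gamma$ in at most two points means that each level set $\{t : \rho(t) = c\}$ has at most two elements. A standard fact is that a continuous function on an interval whose level sets all have cardinality at most two is \emph{unimodal}: it is monotone up to a single interior extremum and then monotone afterwards (one must rule out plateaus and oscillations, which would create level sets of size three or more). Thus $\rho$ increases to a maximum value $R$ and then decreases (or is globally monotone, a degenerate case handled the same way). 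Let the endpoints of $\gamma$ have moduli $\rho_0$ and $\rho_1$; by unimodality both are at most $R$, and I may assume $R$ is attained at an interior point.

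Next I would construct the extension. The idea is to close up $\gamma$ by running each endpoint \emph{inward} toward the origin along a path that strictly decreases the modulus, then joining the two inner ends by a short arc near the origin, so that the completed curve $\widetilde\gamma$ has a modulus profile that rises from near $0$ to $R$ and falls back to near $0$, remaining unimodal on each of the two ``sides'' of the maximum. Concretely, from the endpoint of modulus $\rho_0$ I append a spiral or radial-type arc on which $|\,\cdot\,|$ decreases strictly from $\rho_0$ down to some small $\varepsilon > 0$, and similarly from the endpoint of modulus $\rho_1$ down to the same $\varepsilon$; then I close the gap with an arc lying entirely on or just inside the circle of radius $\varepsilon$. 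Because the two appended descending arcs and the original arc each contribute at most one point to any circle of radius $> \varepsilon$ lying below the relevant endpoint's modulus, and because the geometry places them on opposite ``branches'' of the unimodal profile, the total intersection count with any origin-centred circle stays at most two. The final small closing arc can be chosen to meet only the single circle $|\lambda| = \varepsilon$ (or circles strictly inside it) where $\gamma$ and its extensions no longer reach.

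The main obstacle, and where the care must go, is verifying that the \emph{two-point} bound survives the gluing: I must ensure that an appended descending arc never revisits a modulus already used twice by $\gamma$, and that the two descending arcs do not simultaneously cross a common circle together with a point of $\gamma$. This is why unimodality of $\rho$ on $\gamma$ is the crucial structural input: for moduli $c$ with $c \le \min(\rho_0,\rho_1)$ the curve $\gamma$ already contributes two points, so the extensions must be confined to the modulus range $(0, \min(\rho_0,\rho_1)]$ on one side and arranged so each value of $c$ in that range is hit at most twice in total. I would handle this by choosing the descending arcs to be strictly radially monotone and routing them through the annular region interior to $\gamma$, using the Jordan-type separation of the plane by $\gamma$ together with its chords to the origin to guarantee the appended arcs and $\gamma$ stay disjoint except at the shared endpoints. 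A second, more technical point is ensuring $\widetilde\gamma$ is a \emph{simple} closed curve, i.e. that none of the appended arcs self-intersect or cross $\gamma$; this follows from keeping each appended piece within a distinct sector or annular sub-region, but it requires an explicit, if routine, disjointness check.
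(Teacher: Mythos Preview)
Your structural analysis contains a real error that undermines the construction. The claim that a continuous function on an interval with level sets of cardinality at most two must be \emph{unimodal} is false. Take $\rho$ piecewise linear with successive values $5\to 1\to 10\to 6$: every level $c\in(1,10)$ is attained at most twice, yet $\rho$ has both an interior minimum (at value $1$) and an interior maximum (at value $10$). In this example the two endpoints of $\gamma$ sit at moduli $5$ and $6$, while every circle of radius $c\in(1,5)$ already meets $\gamma$ in two points. Your plan of running both endpoints ``inward toward the origin'' along strictly modulus--decreasing arcs would then add a third intersection with each such circle. You flag exactly this danger (``I must ensure that an appended descending arc never revisits a modulus already used twice by $\gamma$'') but then rely on unimodality to rule it out; since unimodality can fail, the gap is not closed.

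The paper's construction goes in the opposite direction and does not need unimodality. The correct structural fact is that the endpoints of $\gamma$ lie at moduli $\rho_1\le\rho_2$ such that circles of radius $r\in(\rho_1,\rho_2)$ meet $\gamma$ in exactly \emph{one} point $z(r)$, while circles with $r\in[r_1,\rho_1]$ or $r\in[\rho_2,r_2]$ already meet $\gamma$ twice. The extension is therefore built \emph{inside the single--intersection annulus} $\rho_1\le|z|\le\rho_2$: one appends a rigid rotation of the arc $\{z(r):\rho_1\le r\le R\}$ so that it starts at the endpoint $w_1$, and then a short logarithmic spiral from its tip to the other endpoint $w_2$. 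This supplies exactly one additional intersection with each circle in $(\rho_1,\rho_2)$ and none elsewhere, so the total remains at most two. Your inward-extension idea cannot be repaired without first proving something like this annular structure, at which point extending inward is no longer the natural move.
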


\begin{proof}
Let $r_1 \geq 0\,$ (and $r_2 \geq 0$) be the supremum (infimum) of the
values $r$ such
that every origin centred circle with radius at most (at least) $r$ does not
intersect $\gamma$ (if $0\in\gamma$ then define $r_1=0$).
Then the circle of radius $r_1$
intersects $\gamma$ either in one point or two points. Similarly
for the circle of radius $r_2$. In case of two intersection points,
it is easy to extend $\gamma$ so that without loss of generality we
may assume the circles of radius $r_1$ and $r_2$ each intersect $\gamma$
at exactly one point.

Let $\rho_1\,$ (and $\rho_2$) be the supremum (infimum) of the values $r$
such that every origin centred circle with radius $\rho$, where
$r_1 < \rho < r\,\,(r < \rho < r_2)$, intersects $\gamma$ in exactly two points
(we may assume such values $r$ exist since $\gamma$ can be easily extended
to accommodate this).
It follows that circles of radius $r$ such that $\rho_1 < r < \rho_2$
intersect $\gamma$ at exactly one point which we denote $z(r)$.
Denote by $w_j$ and $v_j\,\,(j=1,2)$
the intersection points of the arc $\gamma$ with the circle of radius $\rho_j$
and further choose $w_j$ as one of the end points of the arc $\gamma$. Note
that $|w_j|=|v_j|=\rho_j$ and by defining $z(\rho_j)=v_j$ the function
$r \mapsto z(r)$ becomes continuous in $[\rho_1,\rho_2]$.

Let
\[\epsilon = \frac{1}{2}\min(|v_2\frac{w_1}{v_1} - v_2|,|w_2 - v_2|)\]
and (by continuity) choose $R$ such that $\rho_1 < R < \rho_2$ and
\begin{equation}\label{jarccont}
|z(r) - v_2| < \epsilon \qquad \text{for all } r > R.
\end{equation}
We define the extension
$\widetilde{\gamma}$ as follows.
Let
\[\gamma_0 = \set{z(r)\frac{w_1}{v_1}}{ \rho_1 \leq r \leq R}.\]
Note that this is a rigid rotation and therefore the simple curve
$\gamma \cup \gamma_0$ intersects all circles in at most two points.
Next, let $\alpha = z(R)w_1/v_1$ and
\[\gamma_1 = \set{r \exp\left(i\arg(\alpha)\frac{\rho_2-r}{\rho_2-R} + i\arg(w_2)\frac{r-R}{\rho_2-R}\right)}{R \leq r \leq \rho_2}.\]
Note that $\widetilde{\gamma} = \gamma \cup \gamma_0 \cup \gamma_1$ is closed
and intersects
all circles in at most two points. Due to \eqref{jarccont} it is also a
simple curve provided the direction of rotation of the spiral is properly
chosen either
clockwise or counter-clockwise, i.e. we choose a real number $t$ such
that $t \leq \arg(\alpha),\arg(w_2) < t+2\pi$.
\end{proof}

\begin{theorem}\label{weier}
Let $\gamma \subset \C$ be a compact simple (open or closed) curve such that
$\gamma$ intersects every origin centred circle in at most two points.
Let $f:\gamma \to \C$ be a continuous function and suppose $\epsilon > 0$.
Then there exists a polynomial $p\in\mathcal{P}(r2)$ such
that
\begin{equation}\label{weierdiff}
\max_{z\in\gamma} | f(z) - p(z) | < \epsilon.
\end{equation}
\end{theorem}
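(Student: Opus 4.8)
The plan is to reduce the statement to the classical one-variable Weierstrass theorem by exploiting the at-most-two-to-one projection $z \mapsto |z|$ that the hypothesis on $\gamma$ provides. First I would dispose of the open case: by Lemma \ref{jordanarclemma} I extend $\gamma$ to a simple closed curve $\widetilde{\gamma}$ with the same intersection property, and I extend $f$ to a continuous function on $\widetilde{\gamma}$ by the Tietze extension theorem. Since $\gamma \subset \widetilde{\gamma}$ and a uniform approximation on $\widetilde{\gamma}$ restricts to one on $\gamma$, it suffices to treat a simple \emph{closed} curve. On $\widetilde{\gamma}$ I would then invoke the complex Stone--Weierstrass theorem: the polynomials in $z$ and $\overline{z}$ form a conjugation-closed, point-separating, unital subalgebra of $C(\widetilde{\gamma})$ and are therefore dense. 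As the error in \eqref{weierdiff} depends linearly on the data, it is then enough to approximate each monomial $z^a\overline{z}^b$ by elements of $\mathcal{P}(r2)$.

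Here I use two elementary reductions. On $\gamma$ one has $z\overline{z}=|z|^2$, so $z^a\overline{z}^b$ equals $|z|^{2\min(a,b)}$ times either $z^{a-b}$ or $\overline{z}^{b-a}$; and since $\mathcal{P}(r2)$ is a module over the radial polynomials $\C[|\lambda|^2]$ (multiplying \eqref{zereq1} by $|\lambda|^{2k}$ keeps it in the class), it suffices to approximate the pure powers $z^m$ and $\overline{z}^m$, the bounded factor $|z|^{2\min(a,b)}$ costing only a constant in the uniform error. The heart of the matter is then an exact radial representation of these powers. Writing $[r_{\min},r_{\max}]$ for the range of $|z|$ on $\widetilde{\gamma}$, the curve splits, by the at-most-two-points hypothesis, into two arcs that are continuous graphs over the modulus and meet at the extreme radii; consequently the unordered pair of intersection points of $\widetilde{\gamma}$ with the circle $|z|=r$ varies continuously with $r$, and so do their elementary symmetric functions $e_1(r)=z_1+z_2$ and $e_2(r)=z_1z_2$.

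Solving the two-point interpolation system shows that $z^m=u(|z|^2)+z\,v(|z|^2)$ on $\widetilde{\gamma}$, where $u,v$ are polynomial expressions in $e_1,e_2$, hence continuous functions of $s=|z|^2$ on $[r_{\min}^2,r_{\max}^2]$; the same holds for $\overline{z}^m$ after substituting $\overline{z}=r^2/z$, the coefficients then being continuous multiples of $e_2^{-m}$ with $|e_2|=r^2$. Approximating $u$ and $v$ uniformly by ordinary polynomials via the classical Weierstrass theorem on this interval produces an element $P(|z|^2)+z\,Q(|z|^2)\in\mathcal{P}(r2)$ within the prescribed tolerance of $z^m$ (respectively $\overline{z}^m$), which closes the chain.

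The main obstacle is the regularity of the coefficient functions at the degenerate radii, and this is exactly where the hypotheses are used. At a turning radius $r^*>0$ the two intersection points coalesce; continuity of $u,v$ there is salvaged because the interpolation coefficients are difference quotients of the smooth maps $z\mapsto z^m$ (respectively $z\mapsto\overline{z}^m$) and hence tend to the corresponding derivatives. The single truly delicate point is the origin: if $0\in\gamma$ then $r_{\min}=0$, and for $m=1$ the coefficient of $z$ in the representation of $\overline{z}$ need not extend continuously to $r=0$, since the angular data may oscillate while each small circle is still met at most twice. I would treat this by a cutoff: on $|z|\le\delta$ both $\overline{z}$ and a suitably normalized approximant are $O(\delta)$, so the approximation is automatic there, while on $|z|\ge\delta$ the representation is continuous and the preceding argument applies; balancing $\delta$ against $\epsilon$ finishes the proof. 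It is worth stressing that an \emph{exact} representation cannot work for a general continuous $f$, because the relevant difference quotient is then unbounded at the turning radii; this is precisely why the detour through Stone--Weierstrass, which reduces the problem to polynomial data with tame quotients, is essential rather than cosmetic.
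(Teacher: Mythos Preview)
Your argument is correct, but it takes a genuinely different route from the paper.  The paper's proof is more direct: after reducing to a closed curve via Lemma~\ref{jordanarclemma}, it first replaces $f$ by a nearby continuous $g$ that is \emph{constant} in a neighbourhood of each of the two extremal-radius points $w_1,w_2$.  With that single modification the two-point interpolation coefficients
\[
a_2(r)=\frac{g(z_2(r))-g(z_1(r))}{z_2(r)-z_1(r)},\qquad a_1(r)=g(z_1(r))-z_1(r)\,a_2(r)
\]
are automatically continuous on the whole interval $[r_1,r_2]$ (the numerator vanishes identically near the degenerate radii), and one application of the classical Weierstrass theorem to $a_1,a_2$ finishes the proof.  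No Stone--Weierstrass, no monomial reduction, no separate treatment of the origin is needed.

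Your approach trades this ``make $g$ locally constant'' trick for the observation that for the polynomial targets $z^m$ and $\overline z^m$ the difference quotients extend continuously to the coalescence radii as derivative-type limits (equivalently, are polynomial in the symmetric functions $e_1,e_2$, and rational with $|e_2|=r^2$ in the conjugate case).  That is a nice structural point and it makes explicit the $\C[|\lambda|^2]$-module property of $\mathcal{P}(r2)$, but it costs an extra layer (Stone--Weierstrass plus a cutoff at $r=0$ for $\overline z$) and a small case analysis.  The paper's device is shorter and treats an arbitrary continuous $f$ in one stroke; your route is longer but more ``algebraic'' and would adapt cleanly to the spaces $\mathcal{P}(rk)$ mentioned in Section~\ref{inles}.
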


\begin{proof}
By Lemma \ref{jordanarclemma} we may assume $\gamma$ is a closed simple curve.
Let $r_1 \geq 0\,$ (and $r_2 \geq 0$) be the supremum (infimum) of the
values $r$ such
that every origin centred circle with radius at most (at least) $r$ does not
intersect $\gamma$ (if $0\in\gamma$ then define $r_1=0$).
Then the circle of radius $r_j\,\,(j=1,2)$
intersects $\gamma$ at exactly one point which we denote by $w_j$.
Furthermore,
every circle of radius $r$ such that $r_1 < r < r_2$ intersects $\gamma$
at exactly two points which we denote by $z_1(r)$ and $z_2(r)$ chosen
in one of the two ways to make $r \mapsto z_j(r)$ continuous $(j=1,2)$.
We also define $z_1(r_1)=z_2(r_1)=w_1$ and $z_1(r_2)=z_2(r_2)=w_2$.

It is easy to see that there exists a continuous function $g:\gamma \to \C$
such that $g$ is constant in a neighbourhood of $w_1$ and a neighbourhood of
$w_2$ and
\[\max_{z\in\gamma} | f(z) - g(z) | < \frac{\epsilon}{2}.\]
We then define the functions $a_1,a_2:[r_1,r_2]\to\C$ by
\begin{equation}\label{weierab}
\begin{aligned}
a_1(r) &= g(z_1(r)) - z_1(r)\frac{g(z_2(r)) - g(z_1(r))}{z_2(r)-z_1(r)}, \\
a_2(r) &= \frac{g(z_2(r)) - g(z_1(r))}{z_2(r)-z_1(r)}.
\end{aligned}
\end{equation}
The functions $a_1$ and $a_2$ are continuous since we chose $g$ to be
constant near $w_1$ and $w_2$.
Note that $g(z)=a_1(|z|) + a_2(|z|)z$ for all $z\in\gamma$. By the Weierstrass
approximation theorem for compact intervals on the real line, there exists
ordinary polynomials $p_1$ and $p_2$ such that
\[\max_{r_1\leq r \leq r_2} |a_j(r) - p_j(r^2)| < \frac{\epsilon}{4(1 + r_2)}\qquad
(j=1,2).\]
We then define $p(z) = p_1(|z|^2) + p_2(|z|^2)z$ and see that $p\in
\mathcal{P}(r2)$. Also
\[|g(z) - p(z)| \leq |a_1(|z|) - p_1(|z|^2)| + |a_2(|z|) - p_2(|z|^2)||z|
< \frac{\epsilon}{2}\]
for all $z\in\gamma$. The estimate \eqref{weierdiff} then readily follows.
\end{proof}

It is noteworthy that compact subsets of $\setR$ are admissible. This is the
case in the Hermitian Lanczos method.

The exponential function is the most important example of a nonrational 
(certainly continuous) function. In the present context we obtain
it as a limit of elements in $\mathcal{P}(r2)$ as follows.

\smallskip

\begin{example} Consider a condiagonalizable $\alp M \in \setC^{n\times n}$
as in \eqref{condia}.
The corresponding semigroup is defined as 
$$
e^{t\alp M \tau}=
\sum_{j=0}^{\infty} \frac{(t\alp M\tau)^j}{j!}
$$
for $t\in \setR$. (Then $e^{t\alp M \tau}x_0$ solves
the initial value problem $x'=\alp M \overline{x}$, $x(0)=x_0$.) 
When applied to a vector $b\in \setC^n$ such that $D^{-1}X^{-1}b=r\in \setR^n$, 
we obtain the associated exponential function
\begin{equation}\label{expo}
\sum_{j=0}^{\infty} (\frac{1}{(2j)!}+\frac{\lambda}{(2j+1)!})\left|\lambda \right|^{2j}.
\end{equation}
when looking at the problem in the corresponding basis.
Of course, this reduces to the standard exponential function
for $\lambda \in \setR$.
\end{example}

\smallskip

\section{Numerical experiments}\label{luku5}

We now present very preliminary numerical experiments 
on the relationship between $\mathcal{P}_j(2r)$ and
the convergence of the $\R$-linear GMRES method. For simplicity, we focus 
specifically on the CSYM method. 
We do not have solutions to the polynomial
minimization problems of Theorem \ref{ylara} and Corollary
\ref{csymylara}. However, numerical results on the respective
diagonal linear systems  with a real right-hand side
unveil some of the intricacies
of the latter problem.

In all the examples given below the CSYM method is thus executed to solve
\[\alp D \overline{x} = r,\]
where $\alp D \in \Cnxn$ is a diagonal matrix and
$r\in\R^n$ is such that all its entries are ones. 
The diagonal entries of $\alp D$ are set as 
\begin{equation}\label{numdiag}
d_{jj} = R_j e^{2\pi i\phi_j},
\end{equation}
where $R_1 = 1$ and $R_n = 10$ while the other values of $R_j$ are linearly interpolated
between these two extremes. The angles $\phi_j$ are specificied in each
case separately and described below. For each example we plot
the diagonal of
$\alp D$ and the $\log_{10}$ of the relative residual
$\norm{r - \alp D \overline{x_j}}/\norm{r}$, where
$x_j \in \mathcal{K}_{j-1}(\alp D \tau;r)$ is the minimizing vector
with the starting vector $x_0=0$. We used $n=500$ in each problem.

Before describing the examples, we want to mention
a feature which we find puzzling.
Namely, the numerical results depend on how accurately the entries 
\eqref{numdiag} are generated. This
is illustrated in the first example below. All the
computations
were carried out in {\tt MATLAB}\footnote{Version 7.10.0.499 (R2010a)}
variable-precision arithmetic with an accuracy
of 30 decimals (recall that double-precision floating point numbers
have approximately 16 decimals). The high-precision arithmetic was chosen
in order to show that the apparent numerical instability
of double-precision floating
point computations 
seems to result from
the input $\alp D$ itself rather
than any serious cancellation effect in the minimal residual algorithm.
There is no qualitative change to the results by using more than 30
decimals of accuracy. At the moment we do not have an explanation
for this behaviour.

The actual examples are set up as follows. In the first
example we illustrate the comment made
after Corollary \ref{csymylara}, i.e., when $\sigma (\alp D)$
is on a line through the origin, 
the CSYM method reduces to the MINRES method.
Then in the examples that follow, the line is deformed into
more complicated shapes. The rate of convergence slows down accordingly.

\begin{itemize}
\item {\bf Example 1}.
Here we chose $\phi_j = 1/10$ for all $j$; see 
the left panel of Figure \ref{constantphifig}  for $\sigma (\alp D)$.
The matrix $\alp D$ was first computed in high-precision and in this case
the residual dropped in a straight line. The matrix $\alp D$ was then
converted
to double-precision format and back to high-precision format. The
computation was performed again giving the slower convergence starting
at approximately $10^{-6}$. Similar effect would be seen in Example 2
as well if the precision of the input $\alp D$ was lowered.

\begin{figure}
\begin{center}
\includegraphics{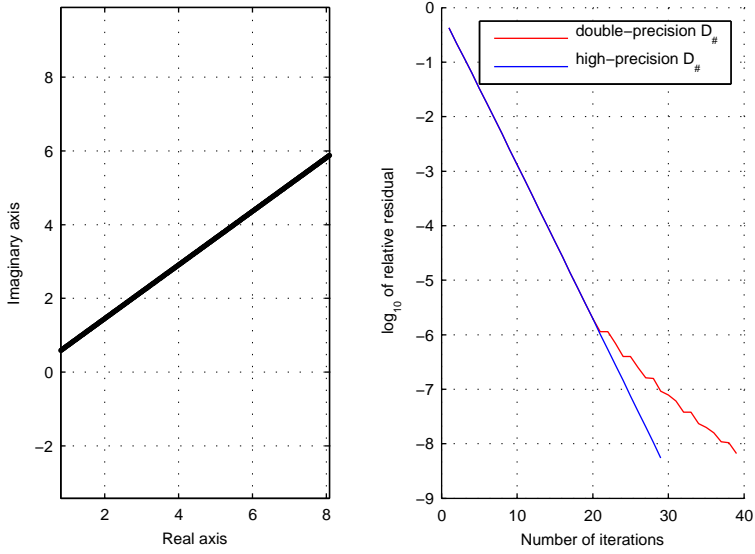}
\caption{Example 1}
\label{constantphifig}
\end{center}
\end{figure}

\begin{figure}
\begin{center}
\includegraphics{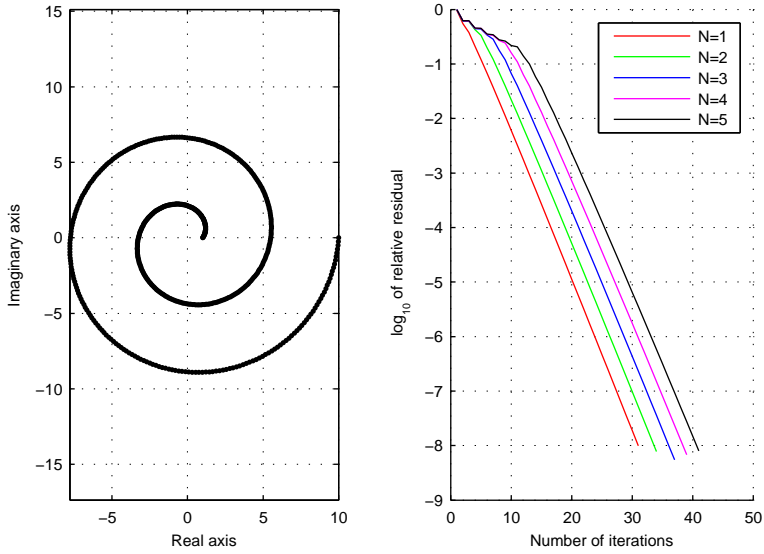}
\caption{Example 2. The diagonal of $\alp D$ in the case $N=2$ is plotted
on the left panel.}
\label{manyspiralsfig}
\end{center}
\end{figure}

\item {\bf Example 2}.
Here we computed using five different sets of angles. We chose
$\phi_1^{(N)} = 0, \phi_n^{(N)} = N$, where $N=1,\dots,5$, and
the rest of $\phi_j^{(N)}$ were linearly interpolated between the extremes.
See Figure \ref{manyspiralsfig}.

\begin{figure}
\begin{center}
\includegraphics{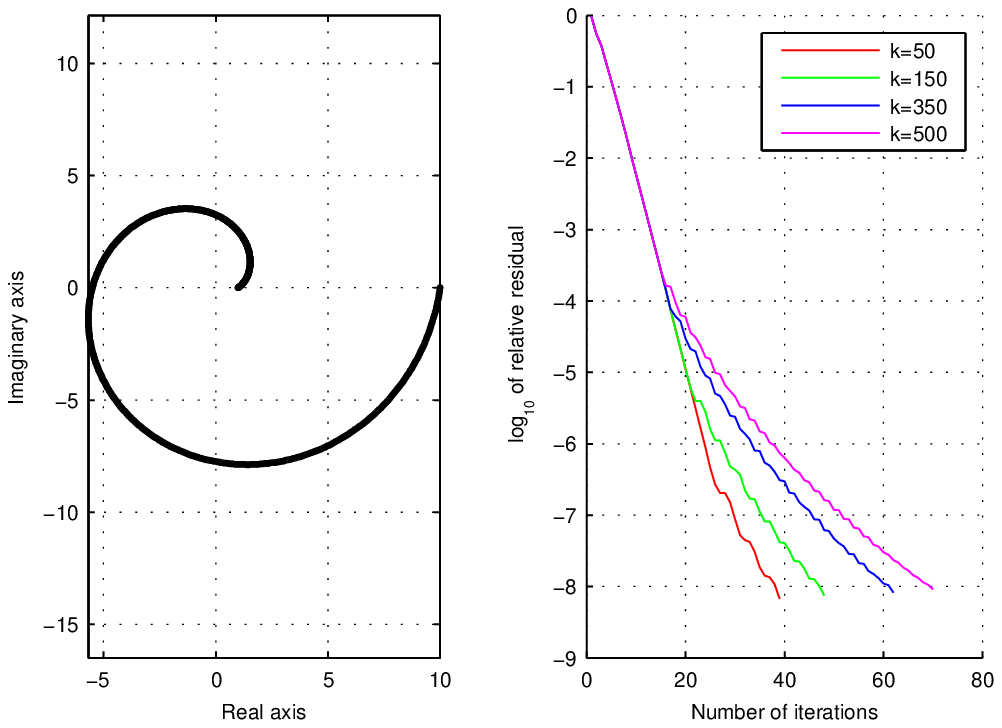}
\caption{Example 3}
\label{innerrandfig}
\end{center}
\end{figure}

\begin{figure}
\begin{center}
\includegraphics{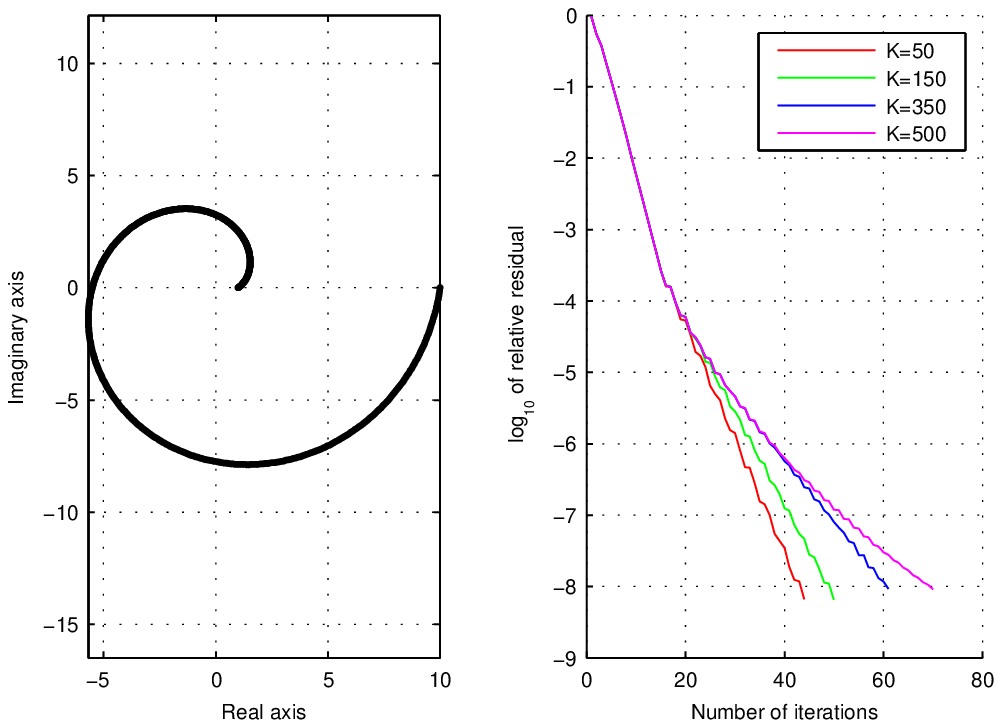}
\caption{Example 4}
\label{outerrandfig}
\end{center}
\end{figure}

\item {\bf Example 3}.
Let $\widetilde{\phi}_1=0, \widetilde{\phi}_n=1$ and the rest of
$\widetilde{\phi}_j$
linearly interpolated between the extremes (Example 2 case $N=1$).
Additionally, a random vector $\rho\in\C^n$ was generated with
entries uniformly distributed between $0$ and $10^{-10}$.
Given an integer $k$ such that $1\leq k \leq n$, we chose
four different sets of angles in \eqref{numdiag} by
$\phi_j^{(k)} = \widetilde{\phi}_j + \rho_j$ for $j \leq k$
and $\phi_j^{(k)} = \widetilde{\phi}_j$ for $j > k$.
The results are displayed in Figure \ref{innerrandfig}.

\item {\bf Example 4}.
Let $\widetilde{\phi}_j$ and $\rho$ be as in Example 3.
Given an integer $K$ such that $1\leq K \leq n$, we now chose
four different sets of angles in \eqref{numdiag} by
$\phi_j^{(K)} = \widetilde{\phi}_j$ for $j \leq n-K$
and $\phi_j^{(k)} = \widetilde{\phi}_j + \rho_j$ for $j > n-K$.
The results are displayed in Figure \ref{outerrandfig}.

\item {\bf Example 5}.
We chose two different sets of angles. For the first set,
$\phi_j^{(1)}$ was chosen uniformly distributed between $0$ and $1$.
For the second set, we chose
$\phi_1^{(2)}=0,\phi_n^{(2)}=1$ and for every odd $j$ the angle
$\phi_j^{(2)}$ is linearly interpolated between the extremes. For even $j$
we linearly interpolated $\phi_j^{(2)}$ between $0$ and $2$.
The results in Figure \ref{twospiralsfig} show that the residual
makes almost no progress at every other iteration step. The residual
for the second set of angles closely follows, but makes more even progress
at all iteration steps.
\end{itemize}

\begin{figure}
\begin{center}
\includegraphics{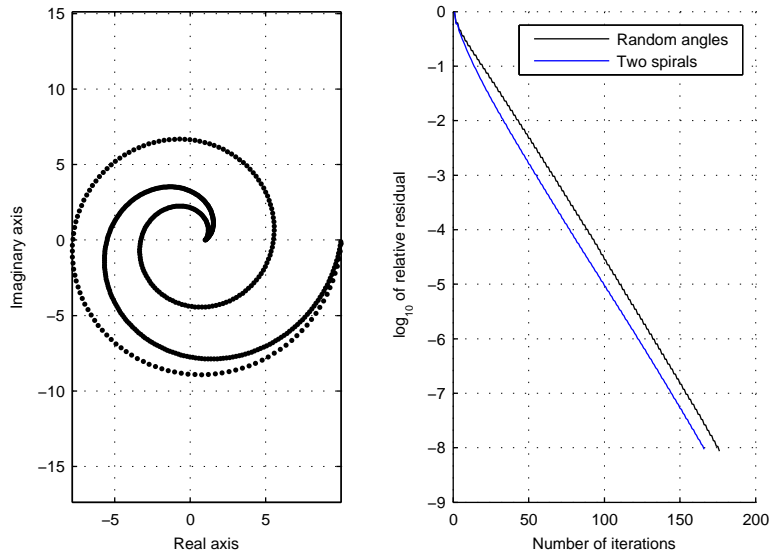}
\caption{Example 5. Only the diagonal of $\alp D$ in the two spirals case
is plotted on the left panel.}
\label{twospiralsfig}
\end{center}
\end{figure}

\end{document}